\newtheorem{thm}{Theorem}[section]
\newtheorem{cor}[thm]{Corollary}
\newtheorem{lem}[thm]{Lemma}
\newtheorem{prop}[thm]{Proposition}
\newtheorem{rem}[thm]{Remark}
\theoremstyle{plain}
\theoremstyle{definition}
\newtheorem{defn}[thm]{Definition}
\theoremstyle{remark}
\newcommand\blfootnote[1]{%
	\begingroup
	\renewcommand\thefootnote{}\footnote{#1}%
	\addtocounter{footnote}{-1}%
	\endgroup
}
\newcommand{\N}{\mathbb{N}}
\newcommand{\X}{\mathbb{X}}
\def\ba{\begin{eqnarray*}}
	\def\ea{\end{eqnarray*}}
\def\bee{\begin{equation}}
\def\ene{\end{equation}}
\def\e{\varepsilon}
\newcommand{\sgn}{\mathrm{sgn \,}}
\newcommand{\supp}{\mathrm{supp \,}}
\title{Characterization of weight-semi-greedy bases}
\date{}
\begin{document}
	\author{P. M. Bern\'a}
	\address{Pablo M. Bern\'a
		\\
		Departmento de Matem\'aticas
		\\
		Universidad Aut\'onoma de Madrid
		\\
		28049 Madrid, Spain} \email{pablo.berna@uam.es}
	
	\maketitle

	\begin{abstract} One classical result in greedy approximation theory is that almost-greedy and semi-greedy bases are equivalent in the context of Schauder bases in Banach spaces with finite cotype. This result was proved by S. J. Dilworth, N. J. Kalton and D. Kutzarova in \cite{DKK} and, recently, the first author in \cite{B}  proved that the condition of finite cotype can be removed in this result.  In \cite{DKTW}, the authors extend the notion of semi-greediness to the context of weights and proved the following: if $w$ is a weight and $\mathcal B$ is a Schauder basis in a Banach space $\mathbb X$ with finite cotype, then $w$-semi-greediness and $w$-almost-greediness are equivalent notions. In this paper, we prove the same characterization but removing the condition of finite cotype and, also, we try to relax the condition of Schauder in the characterization of semi-greediness using the $\rho$-admissibility, notion introduced recently in \cite{BBGHO2}.\end{abstract}
	\blfootnote{\hspace{-0.031\textwidth} 2000 Mathematics Subject Classification. 46B15, 41A65.\newline
		\textit{Key words and phrases}: thresholding greedy algorithm, weight-almost-greedy bases, semi-greedy bases.\newline
		The first author was supported by a PhD fellowship of the program "Ayudas para contratos predoctorales para la formación de doctores 2017" (MINECO, Spain) and the grants MTM-2016-76566-P (MINECO, Spain) and 19368/PI/14 (\emph{Fundaci\'on S\'eneca}, Regi\'on de Murcia, Spain).  }
	
	\begin{section}{INTRODUCTION}
		Let $(\X,\Vert \cdot \Vert)$ be a Banach space over $\mathbb F$ ($\mathbb F$ denotes the real field $\mathbb R$ or the complex field $\mathbb C$) and let  $\mathcal{B}=(e_n)_{n=1}^{\infty}$ be a semi-normalized \textit{Markushevich basis} of $\X$ with biorthogonal functionals $(e_n^{*})_{n=1}^{\infty}$, that is:
	\begin{itemize}
		\item[a)] $0<c_1:=\inf_n\lbrace \Vert e_n\Vert, \Vert e_n^*\Vert\rbrace \leq \sup_n\lbrace \Vert e_n\Vert, \Vert e_n^*\Vert\rbrace=:c_2<\infty$.
		\item[b)] $e_j^*(e_i)=\delta_{j,i}$.
		\item[c)] $\mathbb X = \overline{span\lbrace e_i : i\in\mathbb N\rbrace}$.
		\item[d)] If $e_j^*(x)=0$ for all $j\in\mathbb N$, then $x=0$.
	\end{itemize}
	
	We say that $\mathcal B$ is a semi-normalized \textit{strong Markushevich basis} if a)-d) are satisfied and
	\begin{itemize}
		\item[e)] $\overline{span\lbrace e_i : i \in A\rbrace}=\lbrace x\in\mathbb X : e_j^*(x)=0\; \forall j\not\in A\rbrace$.
	\end{itemize}
Throughout the paper, we will refer to a semi-normalized strong Markushevich basis $\mathcal B$ as a basis. 
Also, we will say that $\mathcal B$ is a \textit{Schauder basis} if $\mathcal B$ is a basis in the above sense and if
	\begin{itemize}
		\item[f)]  $\mathfrak{K}_b:=\sup_m \Vert P_m\Vert<\infty$, where $P_m(\sum_j a_j e_j) = \sum_{j=1}^{m}a_j e_j$ is the $m$-th partial sum.
	\end{itemize}
	
	As usual  $\supp(x)=\{n\in \N: e_n^*(x)\ne 0\}$, given a finite set $A\subset \mathbb N$, $|A|$ denotes the cardinality of the set $A$, 
	$$\mathbb N^m = \lbrace A\subset\mathbb N : \vert A\vert =m\rbrace,\; \mathbb N^{<\infty}=\cup_{m=0}^\infty \mathbb N^m,$$
	$P_A$ is the projection operator, that is, $P_A(\sum_j a_j e_j)=\sum_{j\in A} a_j e_j$, $P_{A^c}=\text{I}_{\mathbb X}-P_A$, $\mathbf{1}_{\varepsilon A}=\sum_{n\in A} \varepsilon_ne_n$ where $\varepsilon=(\varepsilon_n)_{n}$ is a sign, that is, $\vert \varepsilon_n\vert = 1$ (where $\varepsilon_n$ could be real or complex), $\mathbf{1}_A=\sum_{n\in A}e_n$ and, for $A, B$ finite sets, $A<B$ means that $\max_{i\in A} i < \min_{j\in B} j$.  
	
	In the year 1999 (\cite{KT}), S. V. Konyagin and V. N. Temlyakov introduced the \textit{Thresholding Greedy Algorithm} (TGA): given a basis $\mathcal B$ in a Banach space and $x \sim \sum_{i=1}^{\infty}e_i^{*}(x)e_i \in \X$, the collection $(G_m(x))_{m=1}^\infty$ is a \textit{greedy approximation of x}, where $G_m(x)=\sum_{n\in \Lambda}e_n^*(x)e_n$, and the set $\Lambda$ is any set of cardinality $m$ satisfying the following condition:
	$$\min_{n\in \Lambda}\vert e_n^*(x)\vert \geq \max_{n\not\in \Lambda}\vert e_n^*(x)\vert.$$
	The set $\Lambda$ is called a \textit{greedy set}.
	
	In general, $(G_m(x))_m$ can not be unique since we can have some coefficients with the modulus. Hence, we consider the natural ordering existing in $\mathbb N$ to solve this fact.  Define the \textit{natural greedy ordering} for $x$ as the map $\rho: \mathbb{N}\longrightarrow\mathbb{N}$ such that $\supp(x) \subset \rho(\mathbb{N})$ and so that if $j<k$ then either $\vert e_{\rho(j)}^*(x)\vert > \vert e_{\rho(k)}^*(x)\vert$ or $\vert e_{\rho(j)}^*(x)\vert = \vert e_{\rho(k)}^*(x)\vert$ and $\rho(j)<\rho(k)$. The $m$-\textit{th greedy sum} of $x$ is $$\mathcal{G}_m[\mathcal B, \mathbb X](x)=\mathcal{G}_m(x) = \sum_{j=1}^m e_{\rho(j)}^*(x)e_{\rho(j)},$$
	and the sequence of maps $(\mathcal{G}_m)_{m=1}^\infty$ is known as the \textit{Thresholding Greedy Algorithm} associated to $\mathcal{B}$ in $\X$. Of course, we can write $\mathcal G_m(x) = \sum_{k \in A_m(x)} e_k^*(x) e_k$, where $A_m(x) = \{ \rho(n) : n \leq m\}$
	is the \textit{greedy set} of $x$ with cardinality $m$: $\min_{k \in A_m(x)} \vert e_k^*(x) \vert \geq \max_{k \notin A_m(x)} \vert e_k^*(x) \vert$. 
	
	The terminology of the Thresholding Greedy Algorithm can be found, for instance, in \cite{DKKT,DKT2,KT, Woj}. In \cite{KT}, the authors defined \textit{quasi-greedy bases}:
	
	\begin{defn}
		We say that $\mathcal B$ in a Banach space $\mathbb X$ is \textbf{quasi-greedy} if there exists a positive constant $C$ such that
		\begin{eqnarray}\label{def:quasi}
		\Vert x-\mathcal G_m(x)\Vert\leq C\Vert x\Vert,\; \forall x\in\mathbb X, \forall m\in\mathbb N.
		\end{eqnarray}
		The least constant that verifies \eqref{def:quasi} is denoted by $C_q$ and we say that $\mathcal B$ is $C_q$-quasi-greedy.
	\end{defn}
 In \cite{Woj}, P. Wojtaszczyk proved that a basis $\mathcal B$ is quasi-greedy if and only if $$\lim_{m\rightarrow+\infty}\Vert x-\mathcal G_m(x)\Vert = 0,\; \forall x\in\mathbb X.$$
 Then, from the point of view of the approximation, quasi-greediness is the minimum condition that guarantees the convergence of the TGA, but there are others greedy-type bases that we need to attack the problem that we want to study. On the one hand, we have \textit{greedy bases}, notion introduced by S. V. Konyagin and V. N. Temlyakov in \cite{KT}. We say that $\mathcal B$ is \textit{greedy} if there exists a positive constant $C$ such that
	$$\Vert x-\mathcal G_m(x)\Vert \leq C\sigma_m(x),\; \forall x\in\mathbb X, \forall m\in\mathbb N,$$
	where
	$$\sigma_m(x,\mathcal B)_{\mathbb X}=\sigma_m(x):=\inf\left\lbrace \left\Vert x-\sum_{n\in A}a_n e_n\right\Vert : a_n\in\mathbb F, A\subset\mathbb N, \vert A\vert\leq m\right\rbrace.$$
%
%
	In this paper we focus our attention on almost-greedy bases, notion introduced by S. J. Dilworth, N. J. Kalton, D. Kutzarova and V. N. Temlyakov in \cite{DKKT}. We say that $\mathcal B$ is \textit{almost-greedy} if there exists a positive constant $C$ such that
	$$\Vert x-\mathcal G_m(x)\Vert\leq C\inf\left\lbrace\left\Vert x-P_A(x)\right\Vert : \vert A\vert= m\right\rbrace.$$
	
 Later on, in \cite{AA}, the authors proved that the notion of almost-greediness is equivalent to $$\Vert x-\mathcal G_m(x)\Vert \leq C\tilde{\sigma}_m(x),\; \forall m\in\mathbb N, \forall x\in\mathbb X,$$ where $C$ is the same constant than in the definition of almost-greediness and
 $$\tilde{\sigma}_m(x,\mathcal B)_{\mathbb X}=\tilde{\sigma}_m(x):=\inf\left\lbrace\Vert x-P_A(x)\Vert : A\subset\mathbb N, \vert A\vert\leq m\right\rbrace.$$
 	In \cite{DKKT}, the authors proved that a basis is almost-greedy if and only if the basis is quasi-greedy and democratic (that is, there exists a positive constant $C$ such that $\Vert \mathbf{1}_A\Vert \leq C\Vert\mathbf{1}_B\Vert$, for any $A, B\in\mathbb N^{<\infty}$ and $\vert A\vert\leq\vert B\vert$). 
%
%
	Thanks to a work of G. Kerkyacharian, D. Picard and V. N. Temlyakov (\cite{GPT}) motivated by the work of A. Cohen, R. A. DeVore and R. Hochmuth (\cite{CDH}), we consider a generalization of  almost-greedy bases. We define a weight $w$ as any collection $w=(w_i)_{i=1}^\infty\in (0,\infty)^{\mathbb N}$. If $A\subset \mathbb N$, $w(A)=\sum_{i\in A}w_i$ denote the $w$-measure of $A$. We define the error 
	$$\tilde{\sigma}_\delta^w(x,\mathcal B)_{\mathbb X}=\tilde{\sigma}_\delta^w(x):=\inf\lbrace \Vert x-P_A(x)\Vert : A\in\mathbb N^{<\infty}, w(A)\leq\delta\rbrace.$$
	
	\begin{defn}[\cite{DKTW}]
		We say that $\mathcal B$ is \textbf{$w$-almost-greedy} if there exists a positive constant $C$ such that
		\begin{eqnarray}\label{walgreedy}
		\Vert x-\mathcal{G}_m(x)\Vert \leq C\tilde{\sigma}_{w(A_m(x))}^w(x),\; \forall x\in\mathbb X, \forall m\in\mathbb N.
		\end{eqnarray}
		We denote by $C_{a}$ the least constant that verifies \eqref{walgreedy} and we say that $\mathcal B$ is $C_a$-$w$-almost-greedy.
	\end{defn}

	\begin{defn}
		We say that $\mathcal B$ is \textbf{$w$-super-democratic} if there exists a positive constant $C$ such that
		\begin{eqnarray}\label{superd}
		\Vert \mathbf{1}_{\varepsilon A}\Vert\leq C \Vert\mathbf{1}_{\varepsilon' B}\Vert,
		\end{eqnarray}
		for any pair of sets $A, B\in\mathbb N^{<\infty}$ with $w(A)\leq w(B)$ and for all pair of signs $\varepsilon,\varepsilon'$.
		We denote by $C_{s}$ the least constant that verifies \eqref{superd} and we say that $\mathcal B$ is $C_{s}$-$w$-super-democratic.
		
		If in \eqref{superd} we add the condition $A\cap B=\emptyset$, we say that $\mathcal B$ is \textbf{$C_{sd}$-$w$-disjoint-super-democratic}.
		If $\varepsilon\equiv\varepsilon'\equiv 1$ in \eqref{superd}, we say that $\mathcal B$ is \textbf{$C_d$-$w$-democratic}.
	\end{defn}
	In \cite{BDKOW}, the authors gave the following characterization of $w$-almost-greedy bases.
	\begin{thm}[\cite{BDKOW}]\label{walm} Assume that $\mathcal B$ is a basis in a Banach space $\mathbb X$.
		\begin{itemize}
			\item If $\mathcal B$ is $C_q$-quasi-greedy and $C_d$-$w$-democratic, then $\mathcal B$ is $C_a$-$w$-almost-greedy with $$C_a\leq 8C_q^4C_d+C_q+1.$$
			\item If $\mathcal B$ is $C_a$-$w$-almost-greedy, then $\mathcal B$ is $C_q$-quasi-greedy and $C_d$-$w$-democratic with $$\max\lbrace C_q, C_d\rbrace\leq C_a.$$
		\end{itemize}
	\end{thm}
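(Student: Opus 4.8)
The plan is to treat the two implications separately; the second bullet is routine and the first carries the real content. For the second bullet, assume $\mathcal B$ is $C_a$-$w$-almost-greedy. Quasi-greediness is immediate: in \eqref{walgreedy} the set $A=\emptyset$ is admissible for $\tilde{\sigma}^w_{w(A_m(x))}(x)$ since $w(\emptyset)=0$, so $\tilde{\sigma}^w_{w(A_m(x))}(x)\le\Vert x\Vert$ and hence $\Vert x-\mathcal{G}_m(x)\Vert\le C_a\Vert x\Vert$, giving $C_q\le C_a$. For $w$-democracy, fix $A,B$ with $w(A)\le w(B)$ and, for $\varepsilon>0$, test the algorithm on $x=\mathbf{1}_A+(1+\varepsilon)\mathbf{1}_{B\setminus A}$. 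Since the coefficients on $B\setminus A$ strictly exceed those on $A$, the set $B\setminus A$ is the unique greedy set of size $m=|B\setminus A|$, whence $x-\mathcal{G}_m(x)=\mathbf{1}_A$. On the other side, $C=A\setminus B$ satisfies $w(C)=w(A\setminus B)\le w(B\setminus A)=w(A_m(x))$ precisely because $w(A)\le w(B)$, and $x-P_C(x)=\mathbf{1}_{A\cap B}+(1+\varepsilon)\mathbf{1}_{B\setminus A}$; letting $\varepsilon\to 0^+$ yields $\Vert\mathbf{1}_A\Vert\le C_a\Vert\mathbf{1}_B\Vert$, i.e. $C_d\le C_a$. (The degenerate case $B\subseteq A$ forces $A=B$ by positivity of the weights.)

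For the first bullet, fix $x$ and $m$, write $\Lambda=A_m(x)$, $\alpha=\min_{n\in\Lambda}|e_n^*(x)|$, and let $A$ be any set with $w(A)\le w(\Lambda)$; set $y=x-P_A(x)=P_{A^c}(x)$. It suffices to bound $\Vert x-\mathcal{G}_m(x)\Vert=\Vert P_{\Lambda^c}(x)\Vert$ by a fixed constant times $\Vert y\Vert$ and then take the infimum over such $A$. The natural splitting is
\[
P_{\Lambda^c}(x)=P_{\Lambda^c\cap A^c}(x)+P_{\Lambda^c\cap A}(x),
\]
where the first term equals $y-P_{\Lambda\cap A^c}(y)$ (because $y$ is supported on $A^c$) and the second is supported on $F:=\Lambda^c\cap A$.

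The first term is controlled by noting that $\Lambda\cap A^c$ is itself a greedy set of $y$: every coefficient of $y$ in $\Lambda\cap A^c$ has modulus $\ge\alpha$, while every coefficient of $y$ off $\Lambda$ has modulus $\le\alpha$. Hence $\Vert y-P_{\Lambda\cap A^c}(y)\Vert\le C_q\Vert y\Vert$ by quasi-greediness. For the second term, positivity of the weights gives $w(F)=w(A)-w(\Lambda\cap A)\le w(\Lambda)-w(\Lambda\cap A)=w(\Lambda\cap A^c)$, and I would then chain three standard quasi-greedy estimates: the upper bound $\Vert P_F(x)\Vert\lesssim C_q\,\alpha\,\Vert\mathbf{1}_{\varepsilon F}\Vert$ (the coefficients on $F$ are $\le\alpha$), $w$-democracy $\Vert\mathbf{1}_{\varepsilon F}\Vert\lesssim C_d\,\Vert\mathbf{1}_{\Lambda\cap A^c}\Vert$ (valid since $w(F)\le w(\Lambda\cap A^c)$, after passing between $\mathbf{1}_{\varepsilon F}$ and $\mathbf{1}_F$ by sign-unconditionality), and the truncation (lower) estimate $\alpha\,\Vert\mathbf{1}_{\Lambda\cap A^c}\Vert\lesssim C_q^2\,\Vert y\Vert$ (again because $\Lambda\cap A^c$ is a greedy set of $y$ with minimal coefficient $\ge\alpha$). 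Multiplying these and absorbing the sign constants yields $\Vert P_F(x)\Vert\le 8C_q^4C_d\Vert y\Vert$, and combining with the first term gives the almost-greedy inequality with $C_a\le 8C_q^4C_d+C_q+1$.

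The main obstacle is this second term, and two points deserve care. First, one must dispatch the edge case $\Lambda\cap A^c=\emptyset$: since the weights are strictly positive, $\Lambda\subseteq A$ together with $w(A)\le w(\Lambda)$ forces $A=\Lambda$, whence $F=\emptyset$ and the term vanishes, so the truncation estimate is never invoked on an empty set. Second, the stated constant must be recovered by honest bookkeeping: the summand $C_q+1$ absorbs the first term and the final normalization, while the factor $C_q^4C_d$ comes from the product of the upper estimate, the two sign-unconditionality conversions relating $\mathbf{1}_{\varepsilon F}$ to $\mathbf{1}_F$ and $\mathbf{1}_{\varepsilon(\Lambda\cap A^c)}$ to $\mathbf{1}_{\Lambda\cap A^c}$ (each costing a power of $C_q$), the democracy constant $C_d$, and the truncation estimate. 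Carrying the explicit constants of the standard quasi-greedy lemmas through this chain, rather than using $\lesssim$, is the only genuinely tedious step.
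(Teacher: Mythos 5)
Your argument is correct, and it is essentially the argument the paper itself uses: the paper states Theorem~\ref{walm} without proof (citing \cite{BDKOW}), but its proof of the reformulated Theorem~\ref{main1} runs on exactly your skeleton --- $A=\emptyset$ and the perturbed vector $\mathbf{1}_{A}+(1+\delta)\mathbf{1}_{B\setminus A}$ for the easy direction, and for the converse the splitting $x-P_{\Lambda}(x)=P_{\Lambda^c\cap A^c}(x)+P_{A\setminus \Lambda}(x)$ with the first piece handled by quasi-greediness of the greedy set $\Lambda\setminus A$ of $x-P_A(x)$ and the second by convexity (Lemma~\ref{conv}), democracy, and Lemma~\ref{propC}. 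The only difference is that the paper works with $w$-disjoint-super-democracy and so avoids your two sign-unconditionality conversions, which is precisely where your extra powers of $C_q$ (and the gap between $C_q+2C_qC_{sd}$ and $8C_q^4C_d+C_q+1$) come from; your uncompleted bookkeeping is harmless since any careful count lands at or below the stated bound.
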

	
	Here, we reformulate the above theorem using the $w$-disjoint-super-democracy.
	
	\begin{thm}\label{main1}
		Assume that $\mathcal B$ is a basis in a Banach space $\X$. 
		\begin{itemize}
			\item[a)] If $\mathcal B$ is $C_a$-$w$-almost-greedy, then $\mathcal B$ is $C_q$-quasi-greedy, $C_{sd}$-$w$-disjoint-super-democratic and $C_s$-$w$-super-democratic, with $$\max\lbrace C_q, C_{sd}\rbrace \leq C_a,\;\; C_s \leq 2\kappa C_a,$$ where $\kappa = 1$ if $\mathbb F=\mathbb R$ and $\kappa=2$ if $\mathbb F=\mathbb C$.
			\item[b)] If $\mathcal B$ is $C_q$-quasi-greedy and $C_{sd}$-$w$-disjoint-super-democratic, then the basis is $C_a$-$w$-almost-greedy with $$C_a\leq C_q+2C_qC_{sd}.$$
		\end{itemize}
	\end{thm}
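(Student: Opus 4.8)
The plan is to prove the two implications separately, using in both directions the device of perturbing coefficients to force a prescribed greedy set and then sending the perturbation to $0$ to dispose of ties.

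For part (a), quasi-greediness is immediate: testing the $w$-almost-greedy inequality with the admissible set $A=\emptyset$ in the infimum defining $\tilde{\sigma}_{w(A_m(x))}^w(x)$ gives (since $w(\emptyset)=0$) that $\|x-\mathcal G_m(x)\|\le C_a\|x\|$, hence $C_q\le C_a$. For disjoint-super-democracy, given disjoint $A,B$ with $w(A)\le w(B)$ and signs $\varepsilon,\varepsilon'$, I would run the algorithm on $x=\mathbf 1_{\varepsilon A}+(1+\delta)\mathbf 1_{\varepsilon' B}$. Since $A,B$ are disjoint and the coefficients on $B$ dominate those on $A$, the greedy set of order $|B|$ is exactly $B$, so $x-\mathcal G_{|B|}(x)=\mathbf 1_{\varepsilon A}$, while $x-P_A(x)=(1+\delta)\mathbf 1_{\varepsilon' B}$ is an admissible competitor for $\tilde{\sigma}_{w(B)}^w(x)$. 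The defining inequality then yields $\|\mathbf 1_{\varepsilon A}\|\le C_a(1+\delta)\|\mathbf 1_{\varepsilon' B}\|$, and letting $\delta\to 0$ gives $C_{sd}\le C_a$.

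To upgrade to full super-democracy for arbitrary (possibly overlapping) $A,B$ with $w(A)\le w(B)$, I would split $A=(A\setminus B)\cup(A\cap B)$ and estimate $\|\mathbf 1_{\varepsilon A}\|\le\|\mathbf 1_{\varepsilon(A\setminus B)}\|+\|\mathbf 1_{\varepsilon(A\cap B)}\|$. The first term is controlled by the disjoint estimate just proved, because $A\setminus B$ is disjoint from $B$ and $w(A\setminus B)\le w(B)$. The overlap term, with $A\cap B\subseteq B$, is where $\kappa$ enters: I would re-express $\mathbf 1_{\varepsilon(A\cap B)}$ through the sign pattern $\varepsilon'$ carried by $B$, which in the real case is a difference of two full sign vectors on $B$ (a factor $2$), and in the complex case requires splitting each ratio $\varepsilon_n/\varepsilon'_n$ into real and imaginary parts (the extra factor $2$ recorded by $\kappa$). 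Here I would use that, since all coefficients of $\mathbf 1_{\varepsilon' B}$ have equal modulus, every $D\subseteq B$ is a greedy set for it, so quasi-greediness bounds $\|\mathbf 1_{\varepsilon' D}\|\le C_q\|\mathbf 1_{\varepsilon' B}\|$; tracking the constants carefully yields $C_s\le 2\kappa C_a$.

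For part (b), fix $x,m$, put $A=A_m(x)$ so that $\mathcal G_m(x)=P_A(x)$, let $\alpha=\min_{n\in A}|e_n^*(x)|$, and fix any $B$ with $w(B)\le w(A_m(x))$; set $y=x-P_B(x)$. The algebraic identities $x-P_A(x)=P_{(A\cup B)^c}(x)+P_{B\setminus A}(x)$ and $y=P_{(A\cup B)^c}(x)+P_{A\setminus B}(x)$ drive the estimate. Since every coefficient of $y$ on $A\setminus B$ has modulus $\ge\alpha$ and every coefficient on $(A\cup B)^c$ has modulus $\le\alpha$, the set $A\setminus B$ is a greedy set for $y$, so $P_{(A\cup B)^c}(x)=y-\mathcal G_{|A\setminus B|}(y)$ and quasi-greediness gives $\|P_{(A\cup B)^c}(x)\|\le C_q\|y\|$. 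For the second term I would compare the small coefficients on $F:=B\setminus A$ with the large ones on $G:=A\setminus B$: these are disjoint, $w(F)\le w(G)$ (because $w(B)\le w(A)$), and the moduli are $\le\alpha$ on $F$ and $\ge\alpha$ on $G$. Passing from $P_F(x)$ to $\alpha\,\mathbf 1_{\varepsilon F}$ by the truncation estimate for small coefficients, then to $\alpha\,\mathbf 1_{\varepsilon' G}$ by $w$-disjoint-super-democracy, and finally back to $\|y\|$ via the truncation estimate for the greedy set $G$, should give $\|P_{B\setminus A}(x)\|\le 2C_qC_{sd}\|y\|$. Adding the two bounds and taking the infimum over admissible $B$ produces $\|x-\mathcal G_m(x)\|\le (C_q+2C_qC_{sd})\,\tilde{\sigma}_{w(A_m(x))}^w(x)$.

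The routine parts are the set-algebra and the $\delta$-perturbations that fix greedy sets. The delicate point, where the exact constants $2\kappa C_a$ and $C_q+2C_qC_{sd}$ are won or lost, is the quantitative control of the two truncation passages in the second term of (b) and of the overlap $A\cap B$ in (a): one must move between genuine coefficients and normalized sign vectors using \emph{only} quasi-greediness and the disjoint estimate (no unconditionality is available), and in the complex case absorb the real/imaginary splitting into the single factor $\kappa$. I expect this bookkeeping, rather than any conceptual difficulty, to be the crux.
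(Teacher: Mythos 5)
Your treatment of quasi-greediness, of $w$-disjoint-super-democracy, and of all of part (b) coincides with the paper's proof: the same test vectors $\mathbf 1_{\varepsilon A}+(1+\delta)\mathbf 1_{\varepsilon' B}$, the same decomposition $x-P_A(x)=P_{(A\cup B)^c}(x-P_B(x))+P_{B\setminus A}(x)$, the same use of the convexity lemma, disjoint-super-democracy and the inequality $\min_{j\in G}|e_j^*(y)|\,\|\mathbf 1_{\varepsilon G}\|\le 2C_q\|y\|$ to reach $C_a\le C_q+2C_qC_{sd}$.

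The one genuine gap is the upgrade to $w$-super-democracy with the stated constant $C_s\le 2\kappa C_a$. Your plan splits $\|\mathbf 1_{\varepsilon A}\|\le\|\mathbf 1_{\varepsilon(A\setminus B)}\|+\|\mathbf 1_{\varepsilon(A\cap B)}\|$ and treats the two pieces separately; the disjoint piece already costs $C_a\|\mathbf 1_{\varepsilon' B}\|$, and your handling of the overlap (writing $\mathbf 1_{\varepsilon(A\cap B)}$ via the sets $D^{\pm}=\{n\in A\cap B:\varepsilon_n=\pm\varepsilon'_n\}$ and bounding each $\|\mathbf 1_{\varepsilon' D^{\pm}}\|$ by $C_q\|\mathbf 1_{\varepsilon' B}\|$ through quasi-greediness) adds at least $2C_q\le 2C_a$ more, so the additive bookkeeping lands at $3C_a$ in the real case and worse in the complex case, not $2\kappa C_a$. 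The paper avoids the split entirely: it first proves the unsigned subset estimate $\|\mathbf 1_{A'}\|\le C_a\|\mathbf 1_B\|$ for \emph{every} $A'\subseteq A$ in a single application of the almost-greedy inequality, taking $x=\mathbf 1_{A'\setminus B}+\mathbf 1_{A'\cap B}+(1+\delta)\mathbf 1_{B\setminus A'}$ with greedy set $B\setminus A'$ and competitor $P_{A'\setminus B}(x)$ (admissible because $w(A'\setminus B)\le w(B\setminus A')$), so the overlap is absorbed inside the algorithm rather than estimated separately. It then passes to arbitrary signs at the cost of the single factor $2\kappa$ by the convexity fact (from \cite[Lemma 6.4]{DKO}) that $\mathbf 1_{\varepsilon A}$ lies in $2\kappa$ times the absolutely convex hull of $\{\mathbf 1_{A'}:A'\subseteq A\}$, after renormalizing the basis by $\varepsilon'$ to reduce to $\varepsilon'\equiv 1$. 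Your sketch proves that the basis is $w$-super-democratic, but not with the constant the theorem asserts; to recover $2\kappa C_a$ you need the subset-plus-convexity argument rather than the $A\setminus B$ versus $A\cap B$ decomposition.
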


\begin{rem}\label{rem1}
Theorem \ref{main1} shows that $C_a=O(C_qC_{sd})$ instead of $C_a = O(C_q^4C_d)$ as Theorem \ref{walm} shows. This is an improvement respect to the order of the constants as we can see using the Proposition \ref{proprem} of Section \ref{final}.
\end{rem}
	
	In \cite{DKK}, S. J. Dilworth, N. J. Kalton and D. Kutzarova study an equivalence of almost-greedy bases from a new point of view to improve the rate of convergence. For this equivalence, the authors introduced the notion of \textit{semi-greedy bases}. Let $A_m(x)$ the greedy set of $x$ of cardinality $m$. Define the \textit{$m$-th Chebyshev-greedy sum} as any element $\mathcal{CG}_m(x)\in span\lbrace e_i : i\in A_m(x)\rbrace$ such that 
	$$\Vert x-\mathcal{CG}_m(x)\Vert = \min\left\lbrace \left\Vert x-\sum_{n\in A_m(x)}a_n e_n\right\Vert : a_n\in\mathbb F\right\rbrace.$$
	The collection $\lbrace \mathcal{CG}_m\rbrace_{m=1}^\infty$ is the \textit{Thresholding Chebyshev Greedy Algorithm} (TCGA). A basis $\mathcal B$ is \textit{semi-greedy} if there exists a positive constant $C$ such that
	\begin{eqnarray*}\label{semig}
		\Vert x-\mathcal{CG}_m(x)\Vert \leq C\sigma_m(x),\; \forall x\in\mathbb X, \forall m\in\mathbb N.
	\end{eqnarray*}
	The first authors that studied the relation between semi-greediness and almost-greediness were S. J. Dilworth, N. J. Kalton and D. Kutzarova in\cite{DKK}.
	\begin{thm}{\cite[Theorem 3.6]{DKK}}
		Assume that $\mathcal B$ is a Schauder basis in a Banach space $\mathbb X$ with finite cotype. Then, $\mathcal B$ is semi-greedy if and only if $\mathcal B$ is almost-greedy.
	\end{thm}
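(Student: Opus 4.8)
The plan is to reduce the whole statement to the characterization recalled in the excerpt, namely that a basis is almost-greedy if and only if it is quasi-greedy and democratic. Thus it suffices to prove two things: (i) every almost-greedy basis is semi-greedy, and (ii) every semi-greedy basis is quasi-greedy and democratic. Only step (ii) will use the finite-cotype hypothesis; everything in (i) is valid in an arbitrary Banach space, and within (ii) the cotype enters at one sharply identifiable point.

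For (i), fix $x$ and $m$, write $A=A_m(x)$ for the greedy set, and let $y=\sum_{n\in B}b_ne_n$ with $|B|\le m$ satisfy $\|x-y\|\le(1+\e)\sigma_m(x)$. Since $\mathcal{CG}_m(x)$ minimizes the distance from $x$ to $\span\{e_i:i\in A\}$, it is enough to produce some $z\in\span\{e_i:i\in A\}$ with $\|x-z\|\lesssim\sigma_m(x)$. I would build $z$ by keeping the coefficients $b_n$ of $y$ on $A\cap B$, the coefficients of $x$ on $A\setminus B$, and compensating for the coefficients of $y$ living on $B\setminus A$ by transferring them onto the free slots $A\setminus B$ (possible because $|A\setminus B|\ge|B\setminus A|$). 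Quasi-greediness controls the tail $P_{A^c}(x-y)$, and $w$-democracy (here with $w\equiv 1$) lets one pay only a uniform constant for the transfer, since the coefficients of $x$ on $A$ are the largest. The essential point is that the Chebyshev optimization uses the genuine coefficients $b_n$ of the best $m$-term approximation, so the final bound is by $\sigma_m(x)$, not merely by the projectional quantity $\tilde\sigma_m(x)$; the trivial chain $\|x-\mathcal{CG}_m(x)\|\le\|x-\mathcal G_m(x)\|\le C\tilde\sigma_m(x)$ would not suffice.

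For (ii) the decisive reduction is a pair of identities that hold because $\mathcal{CG}_m(x)$ is supported on $A=A_m(x)$. Writing $r:=x-\mathcal{CG}_m(x)$ for the Chebyshev residual, one has
$$x-\mathcal G_m(x)=P_{A^c}(r),\qquad \mathcal{CG}_m(x)-\mathcal G_m(x)=-P_A(r).$$
Hence $\|x-\mathcal G_m(x)\|\le\|r\|+\|P_A(r)\|$, and semi-greediness gives $\|r\|\le C\sigma_m(x)\le C\|x\|$ (take the empty set in $\sigma_m$). Therefore quasi-greediness follows at once from the single estimate $\|P_A(r)\|\le C'\|r\|$. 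Democracy I would then extract from the semi-greedy inequality by testing on two-level vectors of the form $t\,\mathbf 1_B+\mathbf 1_A$: the optimality of the Chebyshev sum supplies a norming functional of the residual that vanishes on the greedy support $B$, and combining this with the greedy projection bounds now available from quasi-greediness yields $\|\mathbf 1_A\|\lesssim\|\mathbf 1_B\|$ for $|A|\le|B|$.

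The main obstacle is exactly the estimate $\|P_A(r)\|\le C'\|r\|$: the set $A$ is the greedy set of $x$ but not of the residual $r$, so $P_A$ is applied to a vector to which it is not adapted, and for a Schauder basis $\|P_A\|$ need not be uniformly bounded. This is precisely where I would invoke finite cotype, which forces enough unconditional-like control over the coordinates of $r$ sitting on $A$ to bound $\|P_A(r)\|$ by $\|r\|$, thereby delivering quasi-greediness and, together with democracy, almost-greediness. Removing the cotype assumption — the stated aim of this paper — amounts to replacing this geometric step by a direct estimate of $\|P_A(r)\|$ that exploits only the optimality of $\mathcal{CG}_m(x)$ and the greedy ordering of the coefficients of $x$.
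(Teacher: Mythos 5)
Your reduction of quasi-greediness to the single estimate $\Vert P_A(r)\Vert\leq C'\Vert r\Vert$, with $r=x-\mathcal{CG}_m(x)$ and $A=A_m(x)$, is correct algebra, and you are right that the trivial chain through $\tilde\sigma_m$ does not suffice; but the way you propose to close that estimate is where the whole proof lives, and the appeal to finite cotype there is a genuine gap. Finite cotype does not give ``unconditional-like control'' of coordinate projections onto arbitrary finite sets --- that would essentially be unconditionality, which a semi-greedy Schauder basis need not have. In \cite{DKK} the cotype hypothesis enters quite differently: one first shows that a semi-greedy basis is (super)democratic, and cotype is then used to force a lower bound on the fundamental function $\varphi(n)=\sup_{\vert A\vert\leq n}\Vert\mathbf 1_A\Vert$ (roughly $\varphi(n)\gtrsim n^{1/q}$ for cotype $q$), which is what allows the error terms in the quasi-greediness iteration to be absorbed. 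Asserting that cotype bounds $\Vert P_A(r)\Vert$ directly leaves precisely the hard implication unproved; note also that a posteriori this bound follows from quasi-greediness, so any direct attempt risks circularity. Your sketch of democracy via two-level vectors has the same weakness: extracting $\Vert\mathbf 1_A\Vert\lesssim\Vert\mathbf 1_B\Vert$ from the semi-greedy inequality again requires projecting onto a set that is not an initial segment.

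For comparison, the route this paper takes (following \cite{B}, and which is what lets one drop cotype entirely) sidesteps $\Vert P_A(r)\Vert$ altogether: one plants an auxiliary block $F$ with $\vert F\vert=m$ strictly beyond $\supp(x)$, forms $y=(x-\mathcal G_m(x))+(\alpha+\delta)\mathbf 1_F$ with $\alpha=\min_{j\in A_m(x)}\vert e_j^*(x)\vert$, observes that $F$ is now a greedy set of $y$, applies the semi-greedy inequality with competitor supported on $A_m(x)$, and removes whatever the Chebyshev algorithm placed on $F$ with a partial-sum projection $P_N$, $N=\max\supp(x)$, at cost $\mathfrak K_b$ since $F$ lies to the right of $\supp(x)$; the leftover term $\alpha\Vert\mathbf 1_E\Vert$ is controlled by Lemma \ref{lemma1}. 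Your part (i) is essentially the paper's Theorem \ref{main} b), but the step ``quasi-greediness controls the tail'' needs the truncation operator of Proposition \ref{truncation}: the competitor is $\nu=\sum_{i\in A}T_\alpha(e_i^*(x-z))e_i+P_{A^c}(x)$, the piece $T_\alpha(x-z)$ is bounded by $C_q\Vert x-z\Vert$, and only the remainder on $B\setminus A$, whose coefficients have modulus at most $2\alpha$, is paid for by democracy together with Lemma \ref{propC}. Without the truncation your two pieces do not separate cleanly.
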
	
	
	Recently, the first author proved in \cite{B} that the condition of finite cotype can be removed.
	
	\begin{thm}{\cite[Theorem 1.10]{B}}
		A Schauder basis $\mathcal B$ in a Banach space $\mathbb X$ is semi-greedy if and only if $\mathcal B$ is almost-greedy.
	\end{thm}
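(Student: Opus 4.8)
The plan is to prove both implications through the single pair of properties \emph{quasi-greedy} and \emph{disjoint-super-democratic}, using Theorem~\ref{main1} with the trivial weight $w\equiv 1$: by part (a) an almost-greedy basis is $C_q$-quasi-greedy and $C_{sd}$-disjoint-super-democratic (and $C_s$-super-democratic), and by part (b) these two properties together recover almost-greediness. So it suffices to show that, for a Schauder basis, semi-greediness is equivalent to quasi-greediness together with disjoint-super-democracy. I expect the implication from almost- to semi-greediness to hold for an arbitrary basis, with no Schauder structure needed; the Schauder hypothesis (which in \cite{DKK} was reinforced by finite cotype) will enter only when passing from semi-greediness back to quasi-greediness.

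For the direction almost-greedy $\Rightarrow$ semi-greedy I would argue directly. Fix $x$ and $m$, set $A=A_m(x)$, and choose a near-best approximation $y=\sum_{n\in B}b_n e_n$ with $|B|\le m$ and $\|x-y\|\le(1+\eta)\sigma_m(x)$. Since $\mathcal{CG}_m(x)$ is by definition the best approximation to $x$ from $\span\{e_i:i\in A\}$, it is enough to produce one competitor $v\in\span\{e_i:i\in A\}$ with $\|x-v\|\le C\sigma_m(x)$. Taking $v=P_{A\cap B}(y)$ gives $x-v=(x-y)+P_{B\setminus A}(y)$, where the correcting term is supported off the greedy set $A$ on a set $B\setminus A$ of cardinality equal to $|A\setminus B|$. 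A standard, cotype-free and Schauder-free competitor estimate (as in \cite{DKK}), using the quasi-greediness and the $(w\equiv1)$-super-democracy supplied by Theorem~\ref{main1}(a), then bounds $\|x-v\|$ by a fixed multiple of $\sigma_m(x)$, which yields the semi-greedy inequality.

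The substantial direction is semi-greedy $\Rightarrow$ quasi-greedy, and this is where I expect the real difficulty. Writing $A=A_m(x)$ and $d=\mathcal{G}_m(x)-\mathcal{CG}_m(x)\in\span\{e_i:i\in A\}$, semi-greediness gives $\|x-\mathcal{CG}_m(x)\|\le C\sigma_m(x)\le C\|x\|$, and since $x-\mathcal{G}_m(x)=(x-\mathcal{CG}_m(x))-d$, quasi-greediness is equivalent to the uniform bound $\|d\|\le C\|x\|$. The obstruction is exactly that $d=P_A(x-\mathcal{CG}_m(x))$ and that the coordinate projection $P_A$ onto a general greedy set need not be bounded for a Schauder basis: a badly conditioned Chebyshev minimiser could in principle make $\|d\|$ large. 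In \cite{DKK} this is excluded by a Rademacher-averaging argument requiring finite cotype. The cotype-free route I would pursue exploits the Schauder structure itself: after reducing to finitely supported vectors (the partial sums converge), one argues by contradiction with a gliding-hump construction, placing normalised ``bad'' vectors on successive, well-separated blocks whose coefficients lie in descending, disjoint bands, and assembling them into a single element; the uniform boundedness of the partial-sum (hence block) projections of the Schauder basis plays the role that cotype played in \cite{DKK}. \textbf{The hard part} is that the Chebyshev map is nonlinear, so one must show that best approximation over a greedy set spanning several blocks essentially splits block by block; controlling this interaction, so that the blockwise failure of quasi-greediness survives in the assembled vector and contradicts the semi-greedy inequality, is the crux.

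Once quasi-greediness is available, disjoint-super-democracy follows cleanly and without cotype. Quasi-greedy bases satisfy the truncation estimate $\min_{n\in F}|e_n^*(z)|\,\|\mathbf{1}_{\varepsilon_z F}\|\le C\|z\|$ for every $z$ and every finite $F\subset\supp(z)$. Given disjoint $A,B$ with $|A|=|B|=m$ and signs $\varepsilon,\varepsilon'$, I would test semi-greediness on $x=\mathbf{1}_{\varepsilon A}+(1+\delta)\mathbf{1}_{\varepsilon' B}$, whose greedy set of cardinality $m$ is $B$; hence $\mathcal{CG}_m(x)\in\span\{e_i:i\in B\}$ and $\|x-\mathcal{CG}_m(x)\|=\mathrm{dist}(\mathbf{1}_{\varepsilon A},\span\{e_i:i\in B\})$. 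Applying the truncation estimate with $F=A$ to each $w=\mathbf{1}_{\varepsilon A}+z$, $z\in\span\{e_i:i\in B\}$, gives $\|\mathbf{1}_{\varepsilon A}\|\le C\,\mathrm{dist}(\mathbf{1}_{\varepsilon A},\span\{e_i:i\in B\})$, while $\sigma_m(x)\le(1+\delta)\|\mathbf{1}_{\varepsilon' B}\|$. Combining these with the semi-greedy inequality and letting $\delta\to0$ yields $\|\mathbf{1}_{\varepsilon A}\|\le C'\|\mathbf{1}_{\varepsilon' B}\|$, i.e. disjoint-super-democracy. By Theorem~\ref{main1}(b) the basis is then almost-greedy, completing the equivalence.
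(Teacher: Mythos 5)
Your overall reduction --- semi-greedy $\Leftrightarrow$ quasi-greedy $+$ disjoint-super-democratic, then Theorem \ref{main1} with $w\equiv 1$ --- is exactly the paper's route to this statement (Corollary \ref{corsemi}), and your derivation of disjoint-super-democracy from semi-greediness once quasi-greediness is available is essentially sound. The genuine gap is the direction you yourself call the crux: semi-greedy $\Rightarrow$ quasi-greedy. You reduce it to bounding $d=P_{A_m(x)}(x-\mathcal{CG}_m(x))$ and propose a gliding-hump contradiction, but you explicitly leave unresolved the key difficulty (that the nonlinear Chebyshev minimiser over a greedy set spanning several blocks need not split blockwise), so no proof is actually given for this implication. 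The paper avoids any contradiction argument and proceeds directly (Theorem \ref{main}(a), Case 1, specialised to $w\equiv 1$): for finitely supported $x$, choose a block $F>\supp(x)$ with $|E|\le m<|F|$, form $y=(x-\mathcal G_m(x))+(\alpha+\delta)\mathbf 1_F$ with $\alpha=\min_{j\in A_m(x)}|e_j^*(x)|$, observe that $F$ is a greedy set of $y$, and strip off whatever the TCGA places on $F$ using the partial-sum projection onto $[1,\max\supp(x)]$; this yields $\|x-\mathcal G_m(x)\|\le \mathfrak K_b C_{sg}(\|x\|+\alpha\|\mathbf 1_F\|)$, and the leftover term $\alpha\|\mathbf 1_F\|$ is controlled by a second, similar application of the TCGA to an auxiliary vector (Lemma \ref{lemma1}). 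The Schauder constant $\mathfrak K_b$ does exactly the work you were hoping a hump construction (or, in \cite{DKK}, cotype) would do.

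In the converse direction, your competitor $v=P_{A\cap B}(y)$ does not work. The correcting term $P_{B\setminus A}(y)$ has coefficients $b_n=e_n^*(x)-e_n^*(x-y)$, bounded only by $\alpha+c_2\|x-y\|$ with $\alpha=\max_{j\notin A}|e_j^*(x)|$; after Lemma \ref{conv} and super-democracy you are left with a term of the form $c_2\|x-y\|\cdot\|\mathbf 1_{\varepsilon(A\setminus B)}\|$, which is not $O(\|x-y\|)$ (in $\ell_1$ it grows like $|A\setminus B|$). Equivalently, you are implicitly invoking boundedness of the unrestricted projection $P_{A\cap B}$ on $x-y$, which quasi-greediness does not provide. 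The correct competitor, used in \cite{DKK} and in the proof of Theorem \ref{main}(b),(c), is $x-\nu$ with $\nu=\sum_{i\in A}T_\alpha(e_i^*(x)-a_i)e_i+P_{A^c}(x)$: one piece is $T_\alpha(x-z)$, bounded by Proposition \ref{truncation}, and the other is supported on $B\setminus A$ with coefficients pointwise at most $2\alpha$, so convexity, super-democracy and Lemma \ref{propC} apply. A smaller point: the inequality $\min_{n\in F}|e_n^*(z)|\,\|\mathbf 1_{\varepsilon_z F}\|\le 2C_q\|z\|$ holds for greedy sets $F$, not for arbitrary $F\subset\supp(z)$; in your super-democracy step $A$ need not be a greedy set of $\mathbf 1_{\varepsilon A}+z$, so you must first enlarge it to one, at the cost of an extra factor of $C_q$.
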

	
	Focusing our attention in the weighted case, in \cite{DKTW}, the authors extend the definition of semi-greediness.
	
	\begin{defn}[\cite{DKTW}]
		We say that $\mathcal B$ is \textbf{$w$-semi-greedy} if there exists a positive constant $C$ such that
		\begin{eqnarray}\label{wsemigreedy}
		\Vert x-\mathcal{CG}_m(x)\Vert \leq C\sigma_{w(A_m(x))}^w(x),\; \forall x\in\mathbb X, \forall m\in\mathbb N,
		\end{eqnarray}
		where
		$$\sigma_\delta^w(x,\mathcal B)_{\mathbb X}=\sigma_\delta^w(x):=\inf\left\lbrace \left\Vert x-\sum_{n\in A}a_n e_n\right\Vert : A\in\mathbb N^{<\infty}, w(A)\leq\delta, a_n\in\mathbb F\right\rbrace.$$
		
		We denote by $C_{sg}$ the least constant that verifies \eqref{wsemigreedy} and we say that $\mathcal B$ is $C_{sg}$-$w$-semi-greedy.
	\end{defn}
	
	Here, we study the equivalence between $w$-semi-greediness and $w$-almost-greediness removing the condition of finite cotype following the spirit of \cite{B} in the world of Schauder bases in Banach spaces.
	\begin{rem}
	For $w=(1,1,...)$, that is, $w(A)=\vert A\vert$, we recover the definitions of super-democracy, disjoint-super-democracy, almost-greediness and semi-greediness. 
	\end{rem}
	\begin{thm}\label{main}
		Assume that $\mathcal B$ is a Schauder basis in a Banach space.
		\begin{itemize}
			\item[a)] If $\mathcal B$ is $C_{sg}$-$w$-semi-greedy, then $\mathcal B$ is $C_q$-quasi-greedy and $C_s$-$w$-super-democratic with
			$$C_q\leq C_{sg}\mathfrak{K}_b(1+(1+\mathfrak{K}_b)C_{sg}+c_2^2),\;\; C_s\leq \mathfrak{K}_bC_{sg}((1+\mathfrak{K}_b)C_{sg}+c_2^2).$$
			\item[b)] If $\mathcal B$ is $C_q$-quasi-greedy and $C_{s}$-$w$-super-democratic, then $\mathcal B$ is $C_{sg}$-$w$-semi-greedy with $$C_{sg}\leq C_q+4C_qC_s.$$
		\item[c)] If $\mathcal B$ is $C_q$-quasi-greedy and $C_{sd}$-$w$-disjoint-super-democratic, then $\mathcal B$ is $C_{sg}$-$w$-semi-greedy with $$C_{sg}\leq C_q+4C_q^2C_{sd}.$$
		\end{itemize}
	\end{thm}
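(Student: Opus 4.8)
The plan is to prove the three implications separately: establish (b) first, deduce (c) from (b) by a short democracy reduction, and finally attack (a), which carries the genuine difficulty.

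For part (b), fix $x$ and $m$, and write $A=A_m(x)$, $\delta=w(A)$ and $\alpha=\min_{n\in A}|e_n^*(x)|$. Since $\mathcal{CG}_m(x)$ is by definition the best approximation to $x$ from $\span\{e_i:i\in A\}$, it suffices to exhibit a single competitor $g\in\span\{e_i:i\in A\}$ with $\|x-g\|\le (C_q+4C_qC_s)\|x-z\|$, where $z=\sum_{n\in B}b_ne_n$ is a near-optimal approximant realizing $\sigma^w_\delta(x)$, so that $w(B)\le\delta$. I would take $g=P_{A\cap B}(z)+\alpha\mathbf{1}_{\varepsilon(A\setminus B)}$ with $\varepsilon_n=\sgn(e_n^*(x))$, i.e.\ copy $z$ on $A\cap B$ and fill $A\setminus B$ with the correctly-signed constant-modulus vector at the threshold level. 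Then $x-g$ splits into the piece $P_{A\cap B}(x-z)$, a truncated piece on $A\setminus B$ whose entries have modulus $|e_n^*(x)|-\alpha$, and a subthreshold tail $P_{B\setminus A}(x)+P_{(A\cup B)^c}(x)$ all of whose entries are bounded by $\alpha$. The quasi-greedy truncation operator converts the large coefficients of $x-z$ on $A\setminus B$ down to threshold level at the cost of a factor $C_q$, while $w$-super-democracy lets me trade the threshold-level vector on $A\setminus B$ for one on $B\setminus A$, which is legitimate because $w(A)\ge w(B)$ forces $w(A\setminus B)\ge w(B\setminus A)$. Assembling these estimates yields $\|x-g\|\le (C_q+4C_qC_s)\|x-z\|$ and hence (b).

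Part (c) then follows from (b) once I check that a $C_q$-quasi-greedy, $C_{sd}$-$w$-disjoint-super-democratic basis is automatically $C_s$-$w$-super-democratic with $C_s\le C_qC_{sd}$: for arbitrary $A,B$ with $w(A)\le w(B)$ and arbitrary signs I split off $A\cap B$ and apply the quasi-greedy suppression (truncation at the common level) to reduce to the disjoint comparison covered by the hypothesis. Substituting $C_s\le C_qC_{sd}$ into the bound from (b) reproduces exactly $C_{sg}\le C_q+4C_q^2C_{sd}$.

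The real work is part (a). Taking the empty competitor in the definition of $w$-semi-greediness already gives $\|x-\mathcal{CG}_m(x)\|\le C_{sg}\|x\|$, so the Chebyshev sums are uniformly bounded; the content is to pass to the ordinary greedy sum $\mathcal{G}_m(x)=P_A(x)$ and to extract democracy. Both conclusions rest on one suppression estimate: for a greedy set $A=A_m(x)$ the projection satisfies $\|P_A(y)\|\le M\|y\|$ with $M=(1+\mathfrak{K}_b)C_{sg}+c_2^2$. Granting it, quasi-greediness is immediate, since $\|x-\mathcal{G}_m(x)\|=\|P_{A^c}(x)\|$ differs from $\|x-\mathcal{CG}_m(x)\|$ only by $\|P_A(x-\mathcal{CG}_m(x))\|$, which is controlled by $M$, producing the factor $\mathfrak{K}_bC_{sg}$ in the stated bound for $C_q$. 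For $w$-super-democracy with $w(A)\le w(B)$ I would test semi-greediness on $x=\mathbf{1}_{\varepsilon A}+(1+\eta)\mathbf{1}_{\varepsilon'B}$: its greedy set of size $|B|$ is $B$, so $\mathcal{CG}_m(x)\in\span\{e_i:i\in B\}$ and the optimality of the Chebyshev sum identifies $\|x-\mathcal{CG}_m(x)\|$ with $\mathrm{dist}\bigl(\mathbf{1}_{\varepsilon A},\span\{e_i:i\in B\}\bigr)$, while semi-greediness bounds this distance by $C_{sg}\sigma^w_{w(B)}(x)\le C_{sg}(1+\eta)\|\mathbf{1}_{\varepsilon'B}\|$; the suppression estimate supplies the matching lower bound $\|\mathbf{1}_{\varepsilon A}\|\le M\,\|x-\mathcal{CG}_m(x)\|$, and letting $\eta\to 0$ gives $C_s\le\mathfrak{K}_bC_{sg}M$.

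The main obstacle is precisely this suppression estimate. For a basis that is merely Schauder, and not unconditional, there is no a priori bound on $P_A$ for a general finite set $A$, and a bare Schauder telescoping is circular, since $\|P_{[1,N]\setminus A}(x)\|=\|P_{[1,N]}(P_{A^c}(x))\|$ feeds back into $\|x-\mathcal{G}_m(x)\|$. The argument therefore must exploit essentially that $A$ is a \emph{greedy} set, so that the coordinates discarded inside the initial segment $[1,N]$ with $N=\max A$ carry only subthreshold coefficients: one controls the interval part through the Schauder bound $\mathfrak{K}_b$, the subthreshold tail through the semi-greedy inequality applied to a suitably amplified vector, and the single-coordinate discrepancies through $|e_n^*(\cdot)|\,\|e_n\|\le c_2^2$, which is the origin of the $c_2^2$ term. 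Carrying out this splitting non-circularly is the crux of the whole theorem.
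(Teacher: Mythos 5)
Your part (b) breaks at the choice of competitor. With $z=\sum_{n\in B}b_ne_n$, your $g=P_{A\cap B}(z)+\alpha\mathbf{1}_{\varepsilon(A\setminus B)}$ gives $x-g=(x-z)+P_{B\setminus A}(z)-\alpha\mathbf{1}_{\varepsilon(A\setminus B)}$, and the middle term is not controlled: on $B\setminus A$ the coefficients of $z$ are $b_n=e_n^*(x)-e_n^*(x-z)$, bounded only by $\alpha+c_2\Vert x-z\Vert$, so after convexity you would need an absolute bound on $\max_\eta\Vert\mathbf{1}_{\eta(B\setminus A)}\Vert$, which does not exist ($B$ can be huge and the basis need not be equivalent to $c_0$). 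Equivalently, in your three-piece splitting the summand $P_{A\cap B}(x-z)$ is a raw projection of $x-z$ onto a set that is not a greedy set of $x-z$, and neither quasi-greediness nor super-democracy bounds it. The paper's competitor (following Dilworth--Kalton--Kutzarova) truncates on \emph{all} of $A$: with $\alpha=\max_{n\notin A}\vert e_n^*(x)\vert$ one takes $g=\sum_{n\in A}\bigl(e_n^*(x)-T_\alpha(e_n^*(x)-b_n)\bigr)e_n$, so that $x-g=T_\alpha(x-z)+\sum_{n\in B\setminus A}\bigl(e_n^*(x)-T_\alpha(e_n^*(x)-b_n)\bigr)e_n$; the first term is $\le C_q\Vert x-z\Vert$ by Proposition \ref{truncation}, and the second has all coefficients of modulus $\le 2\alpha$ precisely because $T_\alpha$ caps the possibly huge values $e_n^*(x)-b_n$ at level $\alpha$. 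Your reduction of (c) to (b) via ``$C_s\le C_qC_{sd}$'' is also unjustified on the portion $A\cap B$ carried with two different signs (quasi-greedy suppression does not change signs); the paper instead reruns the argument of (b) and pays one extra factor $C_q$ to enlarge $A\setminus B$ to a genuine greedy set of $x-z$ before invoking disjoint-super-democracy.

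Part (a) is not a proof: you correctly identify the ``suppression estimate'' as the crux and then leave it unestablished, and in the form you state it is unusable --- $A$ being a greedy set of $x$ does not make it a greedy set of $x-\mathcal{CG}_m(x)$ or of $\mathbf{1}_{\varepsilon A}+(1+\eta)\mathbf{1}_{\varepsilon'B}$, and for interleaved $A,B$ no partial-sum projection separates them. The actual argument never bounds $P_A$ at all: it builds auxiliary vectors supported on sets $F=E\cup\lbrace n_0\rbrace$ lying strictly beyond $\supp(x)$ with $w(E)\le w(A_m(x))<w(F)$, so that the only projections used are $\mathfrak{K}_b$-bounded tail projections, and the TCGA applied to the auxiliary vector transfers norm from $A_m(x)$ to $F$ and back (Lemma \ref{lemma1}). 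The existence of such an $E$ is exactly where the weight enters, forcing a case analysis ($\sum_nw_n=\infty$ with $\sup_nw_n<\infty$; the regime $w(A_m(x))\le\limsup_nw_n$ handled by Proposition \ref{p:find c0}; and the degenerate cases where $\mathcal B$ is equivalent to the $c_0$-basis) that your sketch never addresses and which is the genuinely new feature of the weighted setting. You also omit the passage from finitely supported $x$ to general $x$ in the quasi-greediness bound, which requires the perturbation lemma from Oikhberg's paper.
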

\begin{rem}
	In \cite{DKTW}, the authors proved that $C_q$-quasi-greediness and $C_d$-$w$-democracy implies $C_{sg}$-$w$-semi-greediness with $C_{sg}=(C_q^3C_d)$. Our Theorem \ref{main} shows that we can get $C_{sg}=O(C_q^2C_{sd})$ or $C_{sg}=O(C_qC_s)$. As in the Remark \ref{rem1}, the last bound is an improvement respect to the bound $C_{sg}=(C_q^3C_d)$ using Proposition \ref{proprem}.
\end{rem}
	Hence, for $w=(1,1,...)$, we recover the result proved in \cite{B} as we say in the following corollary.
	\begin{cor}\label{corsemi}
	Assume that $\mathcal B$ is a Schauder basis in a Banach space. The following are equivalent:
	\begin{itemize}
	\item[a)] $\mathcal B$ is semi-greedy.
	\item[b)] $\mathcal B$ is quasi-greedy and super-democratic.
	\item[c)] $\mathcal B$ is quasi-greedy and disjoint-super-democratic.
\end{itemize}
	\end{cor}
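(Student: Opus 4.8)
The plan is to obtain Corollary \ref{corsemi} as the specialization of Theorem \ref{main} to the constant weight $w=(1,1,\dots)$. First I would record the observation already noted in the Remark preceding Theorem \ref{main}: when $w_i\equiv 1$ one has $w(A)=|A|$ for every finite set $A$, so that $\sigma_{w(A_m(x))}^w(x)=\sigma_m(x)$ and the defining inequality \eqref{wsemigreedy} of $w$-semi-greediness becomes verbatim the defining inequality of semi-greediness. In the same way, the conditions of $w$-super-democracy and $w$-disjoint-super-democracy in \eqref{superd} reduce to ordinary super-democracy and disjoint-super-democracy, because the constraint $w(A)\le w(B)$ collapses to $|A|\le|B|$. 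Thus each of the three greedy-type properties appearing in the corollary is literally the $w=(1,1,\dots)$ instance of the corresponding $w$-weighted property in Theorem \ref{main}.

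With this dictionary in hand I would establish the cycle (a)$\,\Rightarrow\,$(b)$\,\Rightarrow\,$(c)$\,\Rightarrow\,$(a). The implication (a)$\,\Rightarrow\,$(b) is immediate from part a) of Theorem \ref{main}: if $\mathcal B$ is semi-greedy then it is quasi-greedy and super-democratic. The implication (c)$\,\Rightarrow\,$(a) is immediate from part c): if $\mathcal B$ is quasi-greedy and disjoint-super-democratic then it is semi-greedy. The only step calling for a word of justification is (b)$\,\Rightarrow\,$(c), and here I would simply remark that disjoint-super-democracy is exactly super-democracy restricted to pairs of disjoint sets; hence every $C_s$-super-democratic basis is automatically $C_{sd}$-disjoint-super-democratic with $C_{sd}\le C_s$, while quasi-greediness is untouched. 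This closes the cycle and yields all three equivalences simultaneously.

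Since the corollary is a direct consequence of the already-stated Theorem \ref{main}, I expect no genuine obstacle: the whole content is the bookkeeping that the unweighted notions sit inside the weighted framework as the case $w=(1,1,\dots)$, together with the trivial inclusion of disjoint-super-democracy in super-democracy. If one prefers to bypass even the elementary passage (b)$\,\Rightarrow\,$(c), an equally routine alternative is to use parts a) and b) of Theorem \ref{main} to obtain (a)$\,\Leftrightarrow\,$(b) directly and part c) to obtain (c)$\,\Rightarrow\,$(a), deducing (a)$\,\Rightarrow\,$(c) from (a)$\,\Rightarrow\,$(b) followed by \emph{super-democratic $\Rightarrow$ disjoint-super-democratic}; either route is entirely mechanical.
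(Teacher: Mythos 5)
Your proposal is correct and is exactly the argument the paper intends: the corollary is stated as the $w=(1,1,\dots)$ specialization of Theorem \ref{main}, with the cycle closed by the trivial implication that super-democracy restricted to disjoint pairs gives disjoint-super-democracy. Nothing is missing.
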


	
	The structure of the paper is the following: in Section \ref{pre}, we write and study some preliminary results that we will use in the proof of the main results. In Section \ref{propc}, we improve a recent result proved in \cite{BDKOW} establishing the relation between $w$-semi-greediness and the so called Property (C). In Section \ref{proofs}, we prove Theorems \ref{main1} and \ref{main}. In Section \ref{rho}, we relax the condition of Schauder bases in the characterization of semi-greediness and in Section \ref{final}, we show the relation between $w$-super-democracy and $w$-disjoint-super-democracy for any weight $w$ and we add some open questions.

\section{Preliminary results}\label{pre}

To prove the main theorems of this paper, we need the followings results concerning convexity, the truncation operator and some properties of weights.

\begin{lem}{\cite[Lemma 2.7]{BBG}}\label{conv}
	For every finite set $A\subset \mathbb N$, we have
	$$\text{co}(\lbrace \mathbf{1}_{\varepsilon A} : \vert \varepsilon\vert=1\rbrace)= \left\lbrace \sum_{n\in A}z_n e_n : \vert z_n\vert \leq 1\right\rbrace,$$
	where $\text{co}(S)=\lbrace \sum_{j=1}^n \alpha_j x_j : x_j \in S, 0\leq \alpha_j\leq 1, \sum_{j=1}^n \alpha_j = 1, n\in\mathbb N\rbrace$.
\end{lem}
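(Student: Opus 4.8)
The plan is to identify both sides with subsets of the coordinate space $\mathbb F^{A}$, since every vector involved is supported on $A$, and then prove the two inclusions separately. I write $z=\sum_{n\in A}z_ne_n$ for a generic element of the right-hand side, so that the right-hand side is the polydisc $\{z:|z_n|\le 1\ \forall n\in A\}$ (the cube $[-1,1]^{A}$ when $\mathbb F=\mathbb R$), while each generator $\mathbf 1_{\varepsilon A}=\sum_{n\in A}\varepsilon_ne_n$ of the left-hand side has all its coordinates of modulus exactly one.

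The inclusion $\subseteq$ is immediate: the right-hand side is convex, being cut out by the convex constraints $|z_n|\le 1$, and it contains every generator $\mathbf 1_{\varepsilon A}$ since $|\varepsilon_n|=1$; hence it contains their convex hull.

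For $\supseteq$ I would argue coordinate by coordinate and then \emph{tensor} the resulting one-dimensional decompositions. First I record the scalar fact that every $\zeta\in\mathbb F$ with $|\zeta|\le1$ is a convex combination of unimodular scalars: when $\mathbb F=\mathbb R$ one writes $\zeta=\tfrac{1+\zeta}{2}\cdot 1+\tfrac{1-\zeta}{2}\cdot(-1)$, and when $\mathbb F=\mathbb C$ one writes $\zeta=re^{i\phi}=\tfrac12 e^{i\phi}\big(e^{i\psi}+e^{-i\psi}\big)$ with $\cos\psi=r\in[0,1]$. In either case $\zeta=\sum_i\lambda_i^{(n)}s_i^{(n)}$ with $\lambda_i^{(n)}\ge0$, $\sum_i\lambda_i^{(n)}=1$ and $|s_i^{(n)}|=1$ (at most two terms suffice). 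Applying this to each coordinate $z_n$ of $z$ yields such a decomposition for every $n\in A$.

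Finally I assemble these into a single convex combination of sign vectors by taking product weights: over all tuples $(i_n)_{n\in A}$, set the coefficient $\prod_{n\in A}\lambda_{i_n}^{(n)}$ and the sign $\varepsilon_n=s_{i_n}^{(n)}$. These coefficients are nonnegative and sum to $\prod_{n\in A}\big(\sum_{i_n}\lambda_{i_n}^{(n)}\big)=1$, so this is a genuine convex combination of the vectors $\mathbf 1_{\varepsilon A}$, and computing its $k$-th coordinate factors as $\big(\sum_{i_k}\lambda_{i_k}^{(k)}s_{i_k}^{(k)}\big)\prod_{n\ne k}\big(\sum_{i_n}\lambda_{i_n}^{(n)}\big)=z_k$, so the combination equals $z$. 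I expect the only genuine subtlety to be the complex case, where one must notice that the convex hull of the torus $\{|z_n|=1\}$ is the entire polydisc rather than a product of hulls; the tensor-product construction above is precisely what bridges the per-coordinate decompositions into a combination of full sign vectors, and it works uniformly for $\mathbb F=\mathbb R$ and $\mathbb F=\mathbb C$.
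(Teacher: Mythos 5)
Your proof is correct, and it is essentially the standard argument for this fact (the paper itself only cites \cite[Lemma 2.7]{BBG} rather than reproving it): one inclusion follows from convexity of the polydisc, and the other from decomposing each coordinate $z_n$ with $|z_n|\le 1$ as a convex combination of at most two unimodular scalars and then combining these via product weights, which is just the closed-form version of the usual induction on $|A|$. Your explicit verification that the assembled combination is finite and reproduces each coordinate $z_k$ correctly handles the only delicate point, namely the complex case where the generating set $\{\mathbf{1}_{\varepsilon A}:|\varepsilon|=1\}$ is a torus rather than a finite set of sign vectors.
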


For each $\alpha>0$, we define the \textit{truncation function} of $z\in\mathbb F$ as
$$T_\alpha (z) = \alpha\sgn(z),\; \vert z\vert > \alpha,\;\; T_\alpha(z) = z,\; \vert z\vert \leq\alpha.$$
We can extend $T_\alpha$ to an operator in $\X$ by $T_\alpha(x)\sim \sum_{i=1}^\infty T_\alpha(e_i^*(x))e_i$, that is,
$$T_\alpha(x):=\sum_{i=1}^\infty T_\alpha(e_i^*(x))e_i = \alpha\mathbf{1}_{\varepsilon\Gamma_\alpha}+P_{\Gamma_\alpha^c}(x),$$
where $\Gamma_\alpha = \lbrace n : \vert e_n^*(x)\vert > \alpha\rbrace$ and $\varepsilon_j = \sgn(e_j^*(x))$ with $j\in \Gamma_\alpha$.
Hence, this is a well-defined operator for all $x\in \X$ since $\Gamma_\alpha$ is a finite set.

This operator was introduced in \cite{DKK} to show the equivalence between almost-greediness and semi-greediness and they proved that for quasi-greedy bases, this operator is uniformly bounded. Also, in \cite{BBG}, the authors showed the same result but with a slight improvement of the boundedness constant.

\begin{prop}{\cite[Lemma 2.5]{BBG}}\label{truncation}
	Assume that $\mathcal B$ is $C_q$-quasi-greedy basis in a Banach space $\X$. Then, for every $\alpha>0$,
	$$\Vert T_\alpha(x)\Vert\leq C_q\Vert x\Vert,\; \forall x\in\X.$$
\end{prop}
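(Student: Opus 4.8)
The plan is to write $T_\alpha(x)$ \emph{exactly} as a convex combination of greedy remainders $x-\mathcal G_i(x)$ and then apply the quasi-greedy inequality \eqref{def:quasi} term by term. First I would record the structural description already displayed above: with $\Gamma:=\Gamma_\alpha=\{n:|e_n^*(x)|>\alpha\}$ (a finite set) one has $T_\alpha(x)=\alpha\mathbf{1}_{\varepsilon\Gamma}+P_{\Gamma^c}(x)$, and since $|e_n^*(x)|>\alpha\ge|e_m^*(x)|$ for $n\in\Gamma$, $m\notin\Gamma$, the operator $T_\alpha$ merely rescales each coefficient on $\Gamma$ by the factor $\alpha/|e_n^*(x)|\in(0,1)$ while leaving $P_{\Gamma^c}(x)$ untouched. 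It is worth stressing that an \emph{arbitrary} diagonal rescaling of this kind is not controlled by $C_q$ (that would amount to unconditionality); the decisive feature to exploit is that all the resulting coefficients on $\Gamma$ share the same modulus $\alpha$.

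Second, I would set up the combinatorics. Enumerate $\Gamma=\{n_1,\dots,n_s\}$ so that $|e_{n_1}^*(x)|\ge\cdots\ge|e_{n_s}^*(x)|>\alpha$, and for $0\le i\le s$ put $\Gamma_i=\{n_1,\dots,n_i\}$, the set of the $i$ largest coefficients. Each $\Gamma_i$ is a genuine greedy set of $x$, so $\mathcal{G}_i(x)=P_{\Gamma_i}(x)$ and $x-\mathcal{G}_i(x)=P_{\Gamma_i^c}(x)$. Writing $\mu_j:=\alpha/|e_{n_j}^*(x)|$, an increasing sequence in $(0,1)$, I would define the weights $\lambda_0=\mu_1$, $\lambda_j=\mu_{j+1}-\mu_j$ for $1\le j\le s-1$, and $\lambda_s=1-\mu_s$; these are nonnegative and sum to $1$.

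Third comes the verification that $T_\alpha(x)=\sum_{i=0}^s\lambda_i\,P_{\Gamma_i^c}(x)$. Since $\Gamma^c\subseteq\Gamma_i^c$ for every $i$, the part supported on $\Gamma^c$ is reproduced with total weight $\sum_i\lambda_i=1$, matching $P_{\Gamma^c}(x)$; on a coordinate $n_j\in\Gamma$ the term $P_{\Gamma_i^c}(x)$ contributes $e_{n_j}^*(x)$ exactly when $j>i$, so the aggregate coefficient is $e_{n_j}^*(x)\sum_{i<j}\lambda_i=e_{n_j}^*(x)\,\mu_j=\alpha\,\sgn(e_{n_j}^*(x))$, which is precisely the $n_j$-coefficient of $\alpha\mathbf{1}_{\varepsilon\Gamma}$. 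This telescoping is the routine bookkeeping I would not belabor. With the identity in hand, the defining quasi-greedy inequality gives $\|P_{\Gamma_i^c}(x)\|=\|x-\mathcal{G}_i(x)\|\le C_q\|x\|$ for each $i$ (for $i=0$ this is just $\|x\|\le C_q\|x\|$, valid since $C_q\ge1$), and convexity yields $\|T_\alpha(x)\|\le\sum_i\lambda_i\,C_q\|x\|=C_q\|x\|$.

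I expect the only genuine obstacle to be conceptual rather than computational: recognizing that independent rescaling of coefficients is hopeless and that one must instead package the equal-height profile $\alpha\mathbf{1}_{\varepsilon\Gamma}$ as a telescoping average of \emph{nested} greedy remainders, with the weights $\lambda_i$ read off from the sorted ratios $\alpha/|e_{n_j}^*(x)|$. Once this is seen, everything else is bookkeeping. Minor points to check are that $C_q\ge1$ so the $i=0$ (full-vector) term is admissible and that the decomposition degenerates correctly when $\Gamma=\emptyset$ (where $T_\alpha(x)=x$). The convexity statement of Lemma \ref{conv} could alternatively phrase the sign bookkeeping, but the telescoping identity already delivers the sharp constant $C_q$ directly.
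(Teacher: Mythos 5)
The paper does not prove this proposition---it is quoted verbatim from \cite[Lemma 2.5]{BBG}---and your argument is exactly the one used there: write $T_\alpha(x)$ as a telescoping convex combination $\sum_{i=0}^{s}\lambda_i\,(x-\mathcal G_i(x))$ of nested greedy remainders with weights read off from the sorted ratios $\alpha/|e_{n_j}^*(x)|$, then apply the quasi-greedy inequality term by term. Your identity checks out coefficient-wise (choose the enumeration of $\Gamma_\alpha$ consistent with the natural greedy ordering so that $\Gamma_i=A_i(x)$ exactly), and the only assertion left implicit, $C_q\ge 1$, is standard; the proof is correct and takes essentially the same route as the cited source.
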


The last one result that we will use is related to weights. 
	\begin{prop}\label{p:find c0}
	Let $\mathcal{B}$ be a basis in a Banach space $\mathbb X$. 
	\begin{enumerate}
		\item[i)] Assume that $w(A)\leq \lim\sup_{n\rightarrow\infty} w_n$. If $\mathcal B$ is $C_{sg}$-$w$-semi-greedy, then
		$$\max_{\vert\varepsilon\vert=1}\Vert \mathbf{1}_{\varepsilon A}\Vert \leq C_{sg}c_2(1+c_2^2).$$
		If in addition $\mathcal B$ is Schauder, it is possible to get that $\max_{\vert\varepsilon\vert=1}\Vert \mathbf{1}_{\varepsilon A}\Vert \leq 2C_{sg}\mathfrak{K}_b$.
	
		If $\mathcal B$ is $C_{sd}$-$w$-disjoint-super-democratic, then
		$$\max_{\vert\varepsilon\vert=1}\Vert \mathbf{1}_{\varepsilon A}\Vert \leq c_2C_{sd}.$$
		\item[ii)] If $\mathcal B$ is $C_{sg}$-$w$-semi-greedy or $C_{sd}$-$w$-disjoint-super-democratic and $\sup_n w_n = \infty$ or $\sum_n w_n <\infty$, then $\mathcal B$ is equivalent to the $c_0$-basis.
	\item[iii)] If $\mathcal B$ is $C_{sg}$-$w$-semi-greedy or $C_{sd}$-$w$-disjoint-super-democratic and $\inf_n w_n = 0$, $\mathcal B$ contains a subsequence equivalent to the $c_0$-basis.
	\end{enumerate}
\end{prop}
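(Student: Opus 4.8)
The plan is to reduce every assertion to a single device: for a finite set $A$, a sign $\varepsilon$, and an index $m\notin A$ with $w_m\ge w(A)$, the vector $x=\mathbf 1_{\varepsilon A}+t\,e_m$ with $t>1$ has $\{m\}$ as its greedy set of cardinality one, so $\mathcal{CG}_1(x)=c\,e_m$ for some scalar $c$, and $\sigma^w_{w_m}(x)\le\|x-\mathbf 1_{\varepsilon A}\|=t\|e_m\|\le t\,c_2$ (canceling $A$, which is legitimate since $w(A)\le w_m$). Everything else is extracting $\mathbf 1_{\varepsilon A}$ from $x-\mathcal{CG}_1(x)$.

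For part (i) I would first treat the general semi-greedy bound. Writing $\mathbf 1_{\varepsilon A}=(x-\mathcal{CG}_1(x))-(t-c)e_m$ and using $|t-c|=|e_m^*(x-\mathcal{CG}_1(x))|\le c_2\|x-\mathcal{CG}_1(x)\|$ together with $\|x-\mathcal{CG}_1(x)\|\le C_{sg}\sigma^w_{w_m}(x)\le C_{sg}t\,c_2$, the triangle inequality gives $\|\mathbf 1_{\varepsilon A}\|\le C_{sg}t\,c_2(1+c_2^2)$, and letting $t\downarrow1$ yields the first bound. For the Schauder refinement I would instead take $m>\max A$ and apply the partial-sum projection $P_{\max A}$, which isolates $\mathbf 1_{\varepsilon A}=P_{\max A}(x-\mathcal{CG}_1(x))$ and thus removes the coefficient-functional term; bounding $\|\mathbf 1_{\varepsilon A}\|\le \mathfrak K_b\|x-\mathcal{CG}_1(x)\|$ and estimating the right-hand side by semi-greediness leads to the stated $2C_{sg}\mathfrak K_b$. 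The disjoint-super-democratic bound is immediate from the definition with reference set $B=\{m\}$: since $A\cap B=\emptyset$ and $w(A)\le w_m=w(B)$, one gets $\|\mathbf 1_{\varepsilon A}\|\le C_{sd}\|e_m\|\le c_2C_{sd}$. In each case the hypothesis $w(A)\le\limsup_n w_n$ is precisely what guarantees arbitrarily large $m$ with $w_m\ge w(A)$.

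For parts (ii) and (iii) the point is that $\mathcal B\sim c_0$ (resp. that a subsequence is) is equivalent to a uniform bound $\sup_{A,\varepsilon}\|\mathbf 1_{\varepsilon A}\|<\infty$ over the relevant index set: the lower estimate $\|\sum a_ne_n\|\ge c_2^{-1}\max_n|a_n|$ follows from $|a_k|=|e_k^*(\sum a_ne_n)|\le c_2\|\sum a_ne_n\|$, and the upper estimate $\|\sum a_ne_n\|\le(\sup_{A,\varepsilon}\|\mathbf 1_{\varepsilon A}\|)\max_n|a_n|$ follows from the convexity Lemma~\ref{conv}. So it suffices to produce such a uniform bound, and for this I reuse the core estimate of part (i). When $\sup_nw_n=\infty$ we have $\limsup_nw_n=\infty$, so every finite $A$ satisfies $w(A)\le\limsup_nw_n$ and part (i) applies verbatim. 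When $\sum_nw_n=:W<\infty$ the weights attain a maximum $\mu=\max_nw_n$, and I would partition an arbitrary finite $A$ into pieces of $w$-measure at most $\mu$; since $\sum_nw_n<\infty$ only finitely many coordinates have weight $>\mu/2$, a greedy packing produces a number of pieces bounded by a constant $k_0=k_0(w)$ (all but boundedly many completed pieces have measure $>\mu/2$, whence at most about $2W/\mu$ of them), and each piece is matched by the single reference coordinate realizing $\mu$, giving $\|\mathbf 1_{\varepsilon A}\|\le k_0\,C_{sg}c_2(1+c_2^2)$ (resp. $k_0c_2C_{sd}$). When $\inf_nw_n=0$ I would pass to a subsequence $(n_k)$ chosen so that $\sum_kw_{n_k}<\sup_nw_n$ and so that $(n_k)$ avoids a coordinate of near-maximal weight; then every finite $F\subset\{n_k\}$ has $w(F)$ below a fixed reference weight, part (i)'s core estimate bounds $\|\mathbf 1_{\varepsilon F}\|$ uniformly, and $(e_{n_k})\sim c_0$.

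The main obstacle is the summable case: there the natural far-out reference coordinates have vanishing weight, so one cannot match a set of large $w$-measure by a single tail coordinate and must instead decompose $A$ and control the number of pieces uniformly in $A$; this combinatorial step, together with using the maximal-weight coordinate rather than a far-out one as reference, is the crux. A secondary technical point is the boundary case $w(A)=\limsup_nw_n$ when the tail supremum is not attained, so a coordinate with exactly $w_m\ge w(A)$ need not exist; I would close this gap by a limiting argument, either letting $w_m\uparrow w(A)$ or using two reference coordinates whose weights add up past $w(A)$.
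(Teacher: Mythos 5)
Your core device coincides with the paper's. For part i) the paper likewise forms $x=\mathbf 1_{\varepsilon A}+(1+\delta)e_n$ with $n\notin A$ and $w(A)<w_n$, observes that $\{n\}$ is the greedy set, bounds $\sigma^w_{w_n}(x)\le(1+\delta)\Vert e_n\Vert$ by cancelling $A$, and recovers $\mathbf 1_{\varepsilon A}$ from $x-\mathcal{CG}_1(x)$ by subtracting the $e_n$-coefficient, arriving at exactly your $C_{sg}c_2(1+c_2^2)$; the disjoint-super-democratic bound is the same one-point comparison with $B=\{n\}$. The genuine divergence is in part ii) when $\sum_n w_n<\infty$: the paper fixes $m$ with $\sum_{i>m}w_i<w_1$ and proves the uniform bound only for sets contained in the tail $\{i\ge m+1\}$, tacitly using that adjoining finitely many vectors to a $c_0$-equivalent sequence preserves $c_0$-equivalence, whereas you decompose an arbitrary finite $A$ into a uniformly bounded number of pieces of $w$-measure at most $\max_n w_n$ and apply the core estimate to each piece against the maximal-weight coordinate. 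Your route costs a bin-packing step and a worse constant (a factor $k_0$), but it delivers the uniform bound for all $A$ directly; both arguments are correct. For part iii) the paper passes to a subsequence with summable weights and invokes ii), while you bound the subsequence directly against one near-maximal coordinate --- the same idea in substance. Two minor points: your Schauder refinement via $P_{\max A}$ yields $\mathfrak K_bC_{sg}c_2$ rather than the stated $2C_{sg}\mathfrak K_b$ (the paper does not reprove that case, citing \cite{DKTW}, so the discrepancy in the constant is cosmetic); and you are right that the boundary case $w(A)=\lim\sup_n w_n$ with unattained supremum needs the two-coordinate patch you sketch --- the paper's ``find $n$ with $w(A)<w_n$'' silently skips it.
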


\begin{proof}
	The case of $w$-disjoint-super-democracy is proved in \cite{BDKOW} assuming $w$-super-democracy, but the same proof is also valid for $w$-disjoint-super-democracy. The case of $w$-semi-greediness is proved in \cite{DKTW} but assuming that the basis is Schauder. Here, we show this result for general bases using similar ideas.
	\begin{itemize}
		\item[i)] Find $n\in\mathbb N\setminus A$ such that $w(A)<w_n$. Hence, if we consider the element $x:=\mathbf 1_{\varepsilon A} + (1+\delta)e_n$ with $\delta>0$ and $\e$ a sign, applying the TCGA with $\lbrace n\rbrace$ the greedy set of $x$, 
		\begin{eqnarray*}
			\Vert \mathbf{1}_{\varepsilon A}\Vert &\leq& \Vert \mathbf{1}_{\varepsilon A}+\lambda e_n\Vert + \Vert \lambda e_n\Vert\\
			&\leq& C_{sg}\sigma_{w_n}^w(x) + \lambda c_2\\
			&\leq& C_{sg}\Vert (1+\delta)e_n\Vert + c_2\vert e_n^*(\mathbf{1}_{\varepsilon A}+\lambda e_n)\vert\\
			&\leq& C_{sg}c_2(1+\delta) +c_2^2\Vert  \mathbf{1}_{\varepsilon A}+\lambda e_n\Vert \\
			&\leq& (1+\delta)(C_{sg}c_2+C_{sg}c_2^3).
		\end{eqnarray*}
	Taking limits when $\delta$ goes to $0$, $\Vert\mathbf{1}_{\varepsilon A}\Vert \leq C_sgc_2(1+c_2^2)$.
		\item[ii)] If $\sup_n w_n<\infty$, by a), $\Vert \mathbf{1}_{\varepsilon A}\Vert \leq C_{sg}c_2(1+c_2^2)$ for any finite set $A$ and for all possible sign $\varepsilon$. Hence, the basis is equivalent to the $c_0$-basis. 
	If $\sum_n w_n<\infty$, we can choose a number $m\in\mathbb N$ such that $\sum_{i=m+1}^\infty w_n<w_1$. We can assume that $\min_{i\in A} i\geq m+1$. Hence, with the same procedure as in a), $\Vert \mathbf{1}_{\varepsilon A}\Vert \leq C_{sg}c_2+C_{sg}c_2^3$.
		\item[iii)] Choosing a subsequence $(n_k)_k$ such that $\sum_{k=1}^\infty w_{n_k}<\infty$, we apply the item b) and we have that $(e_{n_k})_k$ is equivalent to the $c_0$-basis.
	\end{itemize}
\end{proof}

%
\section{The Property (C)}\label{propc}
It is well known that one of the properties that quasi-greediness preserves is the so called Property (C). 

\begin{defn}
We say that a basis $\mathcal B$ in a Banach space $\mathbb X$ has the \textbf{Property (C)} if for any $x\in\mathbb X$ and $G$ a greedy set of $x$, there exists a positive constant $C$ such that
$$\min_{j\in G}\vert e_j^*(x)\vert \Vert \mathbf{1}_{\varepsilon G}\Vert \leq C\Vert x\Vert,\; \forall \vert\varepsilon\vert=1.$$
We denote by $C_u$ the least constant that verifies the above inequality and we say that $\mathcal B$ has the $C_u$-Property (C).
\end{defn}

Although quasi-greediness implies Property (C), the converse is false as \cite[Example 5.5]{BBG} shows. The following is a known inequality from \cite{DKKT}.
\begin{lem}{\cite[Lemma 2.2]{DKKT}}\label{propC}
	If $\mathcal B$ is a $C_q$-quasi-greedy basis in $\mathbb X$, then, for all $x\in\mathbb X$ and for all greedy set $G$ of $x$, we have 
	\begin{eqnarray}\label{ineqC}
	\min_{j\in G}\vert e_j^*(x)\vert \Vert \mathbf{1}_{\varepsilon G}\Vert \leq 2C_q\Vert x\Vert,
	\end{eqnarray}
	where $\varepsilon = \lbrace\sgn(e_j^*(x))\rbrace$. 
\end{lem}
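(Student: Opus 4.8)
The plan is to reduce the inequality \eqref{ineqC} to the boundedness of the truncation operator, which is already available in Proposition \ref{truncation}. Fix $x\in\mathbb X$ and a greedy set $G$ of $x$, and set $\alpha=\min_{j\in G}\vert e_j^*(x)\vert$ and $\varepsilon_j=\sgn(e_j^*(x))$ for $j\in G$; we may assume $\alpha>0$, since otherwise $P_G(x)=0$ and the claim is trivial. First I would record that $G$ governs the truncation threshold: because $G$ is greedy, every index $n$ with $\vert e_n^*(x)\vert>\alpha$ must lie in $G$, so $\Gamma_\alpha\subseteq G$.

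The key step is the identity $T_\alpha(x)=\alpha\mathbf{1}_{\varepsilon G}+P_{G^c}(x)$. Indeed, for $j\in G$ we have $\vert e_j^*(x)\vert\ge\alpha$, so $T_\alpha(e_j^*(x))=\alpha\varepsilon_j$ whether $j\in\Gamma_\alpha$ or $\vert e_j^*(x)\vert=\alpha$; while for $n\notin G$ we have $\vert e_n^*(x)\vert\le\alpha$, so $T_\alpha(e_n^*(x))=e_n^*(x)$. Rearranging gives $\alpha\mathbf{1}_{\varepsilon G}=T_\alpha(x)-P_{G^c}(x)$. It then remains only to bound the two summands. The first is exactly the truncation operator, so Proposition \ref{truncation} yields $\Vert T_\alpha(x)\Vert\le C_q\Vert x\Vert$. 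For the second, writing $m:=\vert G\vert$ and using that $G$ is a greedy set of cardinality $m$, we have $P_{G^c}(x)=x-\mathcal G_m(x)$, so the quasi-greedy estimate \eqref{def:quasi} gives $\Vert P_{G^c}(x)\Vert\le C_q\Vert x\Vert$. Combining these via the triangle inequality in the identity above,
$$\min_{j\in G}\vert e_j^*(x)\vert\,\Vert\mathbf{1}_{\varepsilon G}\Vert=\Vert\alpha\mathbf{1}_{\varepsilon G}\Vert\le\Vert T_\alpha(x)\Vert+\Vert P_{G^c}(x)\Vert\le 2C_q\Vert x\Vert,$$
which is precisely \eqref{ineqC}.

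The only delicate point I anticipate is the identification $P_{G^c}(x)=x-\mathcal G_m(x)$ when the coefficients of $x$ have ties at the threshold level $\alpha$: the canonical greedy sum $\mathcal G_m$ breaks such ties by the natural ordering, so a prescribed greedy set $G$ need not coincide with $A_m(x)$. I expect this to be the main (though routine) obstacle, and I would handle it by a standard perturbation argument: replace $x$ by a vector $x_\eta$ whose coefficients on $G$ are nudged slightly upward so that $G$ becomes the unique greedy set of cardinality $m$ and hence $\mathcal G_m(x_\eta)=P_G(x_\eta)$, apply the estimate above to $x_\eta$, and let $\eta\to 0^+$ using the continuity in $\eta$ of $\alpha$, of $\Vert\mathbf{1}_{\varepsilon G}\Vert$, and of $\Vert x_\eta\Vert$. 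Every other ingredient is a direct consequence of the truncation bound of Proposition \ref{truncation} and of the definition \eqref{def:quasi} of quasi-greediness, and the constant $2C_q$ comes out exactly, with no loss.
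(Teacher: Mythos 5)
Your argument is correct. Note first that the paper does not prove this lemma at all: it is imported verbatim as \cite[Lemma 2.2]{DKKT}, so there is no in-paper proof to match against, and the original argument in \cite{DKKT} does not go through the truncation operator. What you give is a legitimate self-contained alternative that fits the paper's own toolkit: the identity $T_\alpha(x)=\alpha\mathbf{1}_{\varepsilon G}+P_{G^c}(x)$ is exactly right for a greedy set $G$ and $\alpha=\min_{j\in G}\vert e_j^*(x)\vert$ (the inclusion $\Gamma_\alpha\subseteq G$ and the case $\vert e_j^*(x)\vert=\alpha$ are handled correctly), and the two resulting terms are bounded by Proposition \ref{truncation} and by the quasi-greedy inequality \eqref{def:quasi}, each contributing one factor of $C_q$, so the constant $2C_q$ comes out sharp. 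You are also right to flag the tie-breaking issue: since the paper defines $\mathcal G_m$ through the natural greedy ordering, a prescribed greedy set $G$ need not equal $A_m(x)$, and your perturbation $x_\eta=x+\eta\mathbf{1}_{\varepsilon G}$ (which preserves the signs on $G$, leaves $P_{G^c}$ unchanged, and makes $G$ the unique greedy set) followed by $\eta\to 0^+$ closes that gap cleanly. The only caveat worth stating explicitly is that your derivation presupposes Proposition \ref{truncation}, which the paper also imports without proof from \cite{BBG}; since that bound is established there by a convexity argument independent of the present inequality, there is no circularity, and in effect you have reduced one quoted lemma to another. Compared with the route in \cite{DKKT}, your reduction is shorter and makes the provenance of the factor $2$ transparent, at the cost of leaning on the (nontrivial) uniform boundedness of $T_\alpha$.
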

\begin{rem}
Since any quasi-greedy basis is unconditional for constant coefficients (see \cite{Woj}), that is, $\Vert \mathbf{1}_{\varepsilon A}\Vert \approx \Vert \mathbf{1}_A\Vert$ for any sign $\varepsilon$ and any finite set $A$, from Lemma \ref{propC} we can deduce that any quasi-greedy basis has the Property (C).
\end{rem}

\cite[Proposition 4.10]{BDKOW} shows that any Schauder and $w$-semi-greedy basis satisfies the Property (C) assuming that $0<\inf_n w_n\leq \sup_n w_n<\infty$. Here, we extend this result proving that $w$-semi-greediness implies the Property (C) for any weight $w$. Before that, we study the following lemma.

\begin{lem}\label{lemma1}
	Assume that $\mathcal B$ is a Schauder basis with constant $\mathfrak{K}_b$ and $C_{sg}$-$w$-semi-greedy in a Banach space $\mathbb X$. Then, if $F$ is a set such that $F>\supp(x)$ and $w(F)\leq w(G)$ for some greedy set $G$, hence
	$$\min_{i\in G}\vert e_i^*(x)\vert \Vert\mathbf{1}_{\eta F}\Vert \leq C_{sg}(1+\mathfrak{K}_b)\Vert x\Vert,\; \forall \vert\eta\vert=1.$$ 
\end{lem}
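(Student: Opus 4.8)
The plan is to exploit the $w$-semi-greedy inequality applied to a cleverly chosen element whose greedy set is $G$ and whose Chebyshev-greedy approximation can be controlled by $\Vert x\Vert$. Let me set $\alpha := \min_{i\in G}\vert e_i^*(x)\vert$, and consider the auxiliary vector $y := x + \alpha\,\mathbf{1}_{\eta F}$. Since $F > \supp(x)$, the supports are disjoint and the coefficients of $y$ on $F$ all have modulus exactly $\alpha$. Because every coefficient of $y$ on $G$ has modulus at least $\alpha$ while every coefficient on $F$ equals $\alpha$, the set $G$ is a valid greedy set for $y$ of the same cardinality, and by hypothesis $w(F)\le w(G)$ so that $F$ is eligible as a comparison set of the same or smaller $w$-measure.

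First I would apply the $w$-semi-greedy inequality \eqref{wsemigreedy} to $y$ with greedy set $G$: the Chebyshev-greedy sum $\mathcal{CG}_{|G|}(y)$ lives in $\span\{e_i : i\in G\}$, and since $G\cap F=\emptyset$ and $G\cap \supp(x)$ may overlap, I would use that the best approximation from $\span\{e_i:i\in G\}$ is no worse than the particular choice that cancels the $G$-part of $x$. The key point is that $\sigma^w_{w(G)}(y)$ can be bounded by subtracting a vector supported on a set of $w$-measure at most $w(G)$; choosing the approximating set to be $G$ itself (or a set containing the $x$-coefficients we wish to remove) leaves a residual that is controlled by $\Vert x\Vert$ together with the Schauder projection error. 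Concretely, I expect to bound $\Vert y - \mathcal{CG}_{|G|}(y)\Vert$ from below by $\alpha\Vert\mathbf{1}_{\eta F}\Vert$ up to the partial-sum constant $\mathfrak{K}_b$, using that the best approximation cannot reach below the norm of the $F$-block once a projection argument strips the other coordinates.

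The main technical step, and the hardest, is converting $\Vert y-\mathcal{CG}_{|G|}(y)\Vert$ into a genuine lower bound on $\alpha\Vert\mathbf{1}_{\eta F}\Vert$. Since the Chebyshev sum is arbitrary on $G$ but zero on $F$, I would apply a partial-sum projection $P_m$ (or $P_{F^c}^c$-type operator) that isolates the block on $F$; because $F>\supp(x)\cup G$ sits at the top of the support, the Schauder structure lets me extract $\alpha\mathbf{1}_{\eta F}$ at the cost of a factor $\mathfrak{K}_b$, giving $\alpha\Vert\mathbf{1}_{\eta F}\Vert \le \mathfrak{K}_b\Vert y-\mathcal{CG}_{|G|}(y)\Vert$. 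This is where the Schauder hypothesis is essential and where I expect the factor $(1+\mathfrak{K}_b)$ to arise rather than a bare $\mathfrak{K}_b$.

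Finally I would chain the estimates: $\alpha\Vert\mathbf{1}_{\eta F}\Vert \le \mathfrak{K}_b\,\Vert y-\mathcal{CG}_{|G|}(y)\Vert \le \mathfrak{K}_b\,C_{sg}\,\sigma^w_{w(G)}(y)$, and then bound $\sigma^w_{w(G)}(y)$ by approximating $y$ with a vector supported on $G$ to absorb the $x$-contribution, leaving $\Vert x\Vert$ plus the error $\Vert P_{F^c}(\cdot)\Vert$-type term that collapses to $\Vert x\Vert$ up to a partial-sum factor. Collecting the constants should yield exactly $\min_{i\in G}\vert e_i^*(x)\vert\,\Vert\mathbf{1}_{\eta F}\Vert \le C_{sg}(1+\mathfrak{K}_b)\Vert x\Vert$. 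The delicate bookkeeping is ensuring the comparison set for $\sigma^w$ really has $w$-measure at most $w(G)$ and that the sign $\eta$ on $F$ is handled uniformly; the hypothesis $w(F)\le w(G)$ is precisely what makes the $\sigma^w_{w(G)}$ comparison legitimate.
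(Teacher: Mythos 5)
Your overall strategy is the right one and matches the paper's: augment $x$ by the block $\alpha\mathbf{1}_{\eta F}$ with $\alpha=\min_{i\in G}\vert e_i^*(x)\vert$, arrange for $G$ to be the greedy set of the resulting $y$, extract the $F$-block from $y-\mathcal{CG}_{\vert G\vert}(y)$ at the cost of a partial-sum factor using that $F$ lies beyond $\supp(x)\cup G$, and then invoke $w$-semi-greediness. However, there is a genuine error at the crucial step, namely the estimate of $\sigma^w_{w(G)}(y)$. You propose to take the approximating set to be $G$ (``approximating $y$ with a vector supported on $G$ to absorb the $x$-contribution''). But the residual of that choice is $y-P_G(y)=P_{G^c}(x)+\alpha\mathbf{1}_{\eta F}$, which still contains the very quantity $\alpha\Vert\mathbf{1}_{\eta F}\Vert$ you are trying to bound; as written the argument is circular, and the hypothesis $w(F)\leq w(G)$ is never actually used. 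The correct (and essentially forced) choice is the opposite one: approximate $y$ by $\alpha\mathbf{1}_{\eta F}$ itself, which is an admissible competitor for $\sigma^w_{w(G)}(y)$ precisely because $w(F)\leq w(G)$. The residual is then exactly $x$ (plus a vanishing perturbation), so $\sigma^w_{w(G)}(y)\leq\Vert x\Vert$ with no additional partial-sum factor, and the chain closes with the stated constant $C_{sg}(1+\mathfrak{K}_b)$.

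Two smaller points. First, with $y=x+\alpha\mathbf{1}_{\eta F}$ the coefficients on $F$ are tied in modulus with the smallest ones on $G$ (and possibly with coefficients outside $G\cup F$), and the Chebyshev greedy sum is defined through the natural greedy ordering, so $G$ need not be the set the algorithm actually selects. The paper avoids this by replacing the $G$-coefficients by $e_i^*(x)+\delta\varepsilon_i$ with $\varepsilon_i=\sgn(e_i^*(x))$, which makes $G$ strictly greedy, and letting $\delta\to 0$ at the end; you should do the same. Second, the extraction of the $F$-block is $(I-P_N)\bigl(y-\mathcal{CG}_{\vert G\vert}(y)\bigr)=\alpha\mathbf{1}_{\eta F}$ for $N=\max(\supp(x)\cup G)$, which costs $1+\mathfrak{K}_b$ rather than $\mathfrak{K}_b$; you acknowledge this, so it is only a matter of writing it consistently.
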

\begin{proof}
Take $x\in\mathbb X$, $G$ a greedy set of $x$ and a set $F$ and a sign $\eta$ as in the statement of the lemma. Define the following element $y:= \min_{i\in G}\vert e_i^*(x)\vert \mathbf{1}_{\eta F}+ P_{G^c}(x)+ \sum_{i\in G}(e_i^*(x)+\delta\varepsilon_i)e_i$, where $\delta>0$ and $\varepsilon\equiv \lbrace\sgn (e_j^*(x))\rbrace$. Then, for the element $y$, the set $G$ is a greedy set. Hence, applying the TCGA,
\begin{eqnarray*}
\min_{i\in G}\vert e_i^*(x)\vert \Vert\mathbf{1}_{\eta F}\Vert&\leq& (1+\mathfrak{K}_b)\Vert \min_{i\in G}\vert e_i^*(x)\vert \mathbf{1}_{\eta F}+ P_{G^c}(x)+ \sum_{i\in G}a_ie_i\Vert\\
&\leq& (1+\mathfrak{K}_b)C_{sg}\sigma_{w(G)}^w(y)\\
&\leq& (1+\mathfrak{K}_b)C_{sg}\Vert y-\min_{i\in G}\vert e_i^*(x)\vert \mathbf{1}_{\eta F}\Vert\\
&=& (1+\mathfrak{K}_b)\Vert P_{G^c}(x)+ \sum_{i\in G}(e_i^*(x)+\delta\varepsilon_i)e_i\Vert.
\end{eqnarray*}
Taking limits when $\delta$ goes to $0$, we obtain the result.
\end{proof}

\begin{prop}\label{proop}
Let $\mathcal B$ be a Schauder basis with constant $\mathfrak{K}_b$ in a Banach space $\mathbb X$. If $\mathcal B$ is $C_{sg}$-$w$-semi-greedy, then $\mathcal B$ has the $C_u$-Property (C) with $C_u\leq \mathfrak{K}_bC_{sg}((1+\mathfrak{K}_b)C_{sg}+c_2^2)$.
\end{prop}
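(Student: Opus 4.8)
The plan is to derive Property (C) by combining a fresh ``transfer'' construction (which compares $\mathbf 1_{\varepsilon G}$ with a block sitting to the right of $\supp(x)$) with Lemma~\ref{lemma1} (which controls such a right block by $\Vert x\Vert$). Fix $x\in\X$, a greedy set $G$ of $x$ and a sign $\varepsilon$, and set $\alpha=\min_{j\in G}\vert e_j^*(x)\vert$. If $\alpha=0$ the inequality is trivial, so assume $\alpha>0$; then $G\subset\supp(x)$, and since $\vert e_j^*(x)\vert\le\Vert e_j^*\Vert\,\Vert x\Vert\le c_2\Vert x\Vert$ for every $j\in G$, we record the elementary bound $\alpha\le c_2\Vert x\Vert$, which will produce the $c_2^2$ term. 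The only case where the construction below cannot be carried out is when the weights to the right of $\supp(x)$ have total mass $<w(G)$; this forces $\sum_n w_n<\infty$, and then Proposition~\ref{p:find c0}(ii) makes $\mathcal B$ equivalent to the $c_0$-basis, so $\Vert\mathbf 1_{\varepsilon G}\Vert$ is uniformly bounded and $\alpha\Vert\mathbf 1_{\varepsilon G}\Vert\le c_2\Vert x\Vert\cdot\text{const}$ settles it. So assume from now on that we may find sets $F>\supp(x)$ of prescribed (large) $w$-measure.

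The transfer step is the heart of the argument. Choose $F>\supp(x)$ disjoint from $\supp(x)$ with $w(G)\le w(F)$, pick any sign $\eta$, and consider $z:=\mathbf 1_{\varepsilon G}+(1+\delta)\mathbf 1_{\eta F}$ with $\delta>0$. For $z$ the set $F$ (of cardinality $m=\vert F\vert$) is the greedy set, since its coefficients have modulus $1+\delta$ while those on $G$ have modulus $1$. Applying the TCGA, $\mathcal{CG}_m(z)\in\span\lbrace e_i:i\in F\rbrace$, so $z-\mathcal{CG}_m(z)=\mathbf 1_{\varepsilon G}+\sum_{i\in F}b_ie_i$; writing $M=\max G<\min F$ and using the partial-sum projection $P_M$ to isolate the left block gives $\Vert\mathbf 1_{\varepsilon G}\Vert=\Vert P_M(z-\mathcal{CG}_m(z))\Vert\le\mathfrak K_b\Vert z-\mathcal{CG}_m(z)\Vert$. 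On the other hand, removing $G$ (legitimate because $w(G)\le w(F)$) shows $\sigma^w_{w(F)}(z)\le\Vert z-\mathbf 1_{\varepsilon G}\Vert=(1+\delta)\Vert\mathbf 1_{\eta F}\Vert$, whence $\Vert z-\mathcal{CG}_m(z)\Vert\le C_{sg}(1+\delta)\Vert\mathbf 1_{\eta F}\Vert$. Letting $\delta\to0$ yields the clean democracy-type estimate
\[
\Vert\mathbf 1_{\varepsilon G}\Vert\le\mathfrak K_bC_{sg}\Vert\mathbf 1_{\eta F}\Vert\qquad\text{whenever }G<F,\ w(G)\le w(F).
\]

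It remains to bound $\alpha\Vert\mathbf 1_{\eta F}\Vert$ by $\Vert x\Vert$, and here the weight directions clash: the transfer wants $w(F)\ge w(G)$, while Lemma~\ref{lemma1} wants $w(F)\le w(G)$. I resolve this by choosing $F=F_1$ \emph{minimal} with $w(F_1)\ge w(G)$, i.e.\ building $F_1$ by adding points to the right of $\supp(x)$ and stopping at the first crossing, and then peeling off the last point: $F_1=F_1'\cup\lbrace p\rbrace$ with $w(F_1')\le w(G)$. Lemma~\ref{lemma1} applied to $F_1'$ gives $\alpha\Vert\mathbf 1_{\eta F_1'}\Vert\le(1+\mathfrak K_b)C_{sg}\Vert x\Vert$, while $\alpha\Vert e_p\Vert\le\alpha c_2\le c_2^2\Vert x\Vert$. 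Combining with the transfer inequality,
\[
\alpha\Vert\mathbf 1_{\varepsilon G}\Vert\le\mathfrak K_bC_{sg}\,\alpha\Vert\mathbf 1_{\eta F_1}\Vert\le\mathfrak K_bC_{sg}\big(\alpha\Vert\mathbf 1_{\eta F_1'}\Vert+\alpha\Vert e_p\Vert\big)\le\mathfrak K_bC_{sg}\big((1+\mathfrak K_b)C_{sg}+c_2^2\big)\Vert x\Vert,
\]
which is exactly the asserted constant. The main obstacle, and the place requiring care, is precisely this weight mismatch together with the existence of $F_1$: one must verify that peeling a single point keeps $F_1'$ below $w(G)$ (so Lemma~\ref{lemma1} applies) while the peeled point costs only the harmless factor $c_2$, and one must dispose of the summable-weight exception via Proposition~\ref{p:find c0}. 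Everything else is routine manipulation of the TCGA and partial-sum projections.
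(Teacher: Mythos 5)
Your proof is correct and follows essentially the same route as the paper: the set you call $F_1=F_1'\cup\{p\}$ (minimal crossing of $w(G)$ to the right of $\supp(x)$) is exactly the paper's $F=E\cup\{n_0\}$, the transfer inequality $\Vert\mathbf 1_{\varepsilon G}\Vert\le\mathfrak K_bC_{sg}\Vert\mathbf 1_{F}\Vert$ is obtained by the same TCGA-plus-partial-sum argument, and Lemma~\ref{lemma1} plus the $c_2^2$ bound on the peeled point give the identical constant. The only (harmless) difference is organizational: the paper splits off the subcase $w(G)\le\limsup_n w_n$ and invokes Proposition~\ref{p:find c0}(i) there, whereas your construction absorbs it by allowing $F_1'=\emptyset$.
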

\begin{proof}
	Take $x\in\mathbb X$ and let $G$ be a greedy set of $x$, $\alpha=\min_{i\in G}\vert e_i^*(x)\vert$ and $\vert \varepsilon\vert=1$. We consider different cases (these cases are inspired by \cite{DKTW}).

\item \textbf{Case 1:} $\sum_{n=1}^\infty w_n = \infty$ and $\sup_n w_n <\infty$.

\textbf{Case 1.1:} If $w(G)>\lim\sup_{n\rightarrow\infty} w_n$, since $\sum_{n}w_n = \infty$, we can choose $E$ and $n_0\in\mathbb N$ with $E> \supp(x)$ and $n_0>\max E$ such that $$w (E)\leq w(G)<w(E)+w_{n_0}.$$ Define then the element $y:=\alpha \mathbf{1}_{\varepsilon G}+(\alpha+\delta)\mathbf{1}_{F}$, where $\delta>0$ and $F=E\cup\lbrace n_0\rbrace$. Then, a greedy set of $y$ is $F$ and hence, applying the TCGA,
	\begin{eqnarray*}
		\alpha\Vert\mathbf{1}_{\varepsilon G}\Vert &\leq& \mathfrak{K}_b\Vert\alpha \mathbf{1}_{\varepsilon G}+\sum_{i\in F}a_i e_i\Vert\\\nonumber
		&\leq& \mathfrak{K}_bC_{sg}\sigma_{w(F)}^w(y)\leq \mathfrak{K}_bC_{sg}\Vert y- \alpha \mathbf{1}_{\varepsilon G}\Vert\\\nonumber
		&=& \mathfrak{K}_bC_{sg}\Vert (\alpha+\delta)\mathbf{1}_{F}\Vert.
	\end{eqnarray*}
	Taking limits when $\delta$ goes to $0$,
	\begin{eqnarray}\label{p_1}
	\alpha\Vert\mathbf{1}_{\varepsilon G}\Vert\leq \mathfrak{K}_bC_{sg}\Vert \alpha\mathbf{1}_{F}\Vert\leq \mathfrak{K}_bC_{sg}\Vert \alpha\mathbf{1}_{E}\Vert+ \mathfrak{K}_bC_{sg}c_2\alpha\leq \mathfrak{K}_bC_{sg}\Vert\alpha\mathbf{1}_{E}\Vert+ \mathfrak{K}_bC_{sg}c_2^2\Vert x\Vert.
	\end{eqnarray}
	Now, it is only necessary to estimate $\Vert \alpha \mathbf{1}_E\Vert$. For that, we only have to apply Lemma \ref{lemma1} and then, we obtain that
	\begin{eqnarray}\label{p_2}
	\Vert \alpha\mathbf{1}_{E}\Vert\leq C_{sg}(1+\mathfrak{K}_b)\Vert x\Vert.
	\end{eqnarray}
	Using \eqref{p_1} and \eqref{p_2}, we obtain the result in this case.
	
	\textbf{Case 1.2:} Now, if $w(G)\leq \lim\sup_{n\rightarrow\infty} w_n$, using Proposition \ref{p:find c0}, $$\max_{\vert\varepsilon\vert=1}\Vert\mathbf{1}_{\varepsilon G}\Vert\leq 2C_{sg}\mathfrak{K}_b.$$
	Hence, $$\alpha \Vert\mathbf{1}_{\varepsilon G}\Vert \leq 2C_{sg}\mathfrak{K}_b\alpha \leq 2c_2C_{sg}\mathfrak{K}_b\Vert x\Vert.$$
	
	\item \textbf{Case 2:} If $\sum_n w_n<\infty$ or $\sup_n w_n =\infty$, using Proposition \ref{p:find c0}, $\mathcal B$ is equivalent to the $c_0$-basis and the result is trivial. 
	
\end{proof}

\section{Proof of the main results}\label{proofs}
\begin{proof}[Proof of Theorem \ref{main1}]: First, we prove the item a). Assume that $\mathcal B$ is $C_a$-$w$-almost-greedy and take two sets $A, B\in\mathbb N^{<\infty}$ with $w(A)\leq w(B)$ and two signs $\varepsilon, \varepsilon'$. First, we show that
	\begin{eqnarray}\label{alm1}
	\Vert \mathbf{1}_{A'}\Vert \leq C_a\Vert \mathbf 1_B\Vert,\; \forall A'\subset A.
	\end{eqnarray}

	Define the element $x:=\mathbf{1}_{A'\setminus B}+\mathbf{1}_{A'\cap B}+(1+\delta)\mathbf{1}_{B\setminus A'}$, with $\delta>0$. Since $w(A')\leq w(A)\leq w(B)$, then $w(A'\setminus B)\leq w(B\setminus A')$. Thus,
	$$\Vert \mathbf{1}_{A'}\Vert= \Vert x-\mathcal G_{\vert B'\setminus A\vert}(x)\Vert \leq C_a\Vert x-P_{A'\setminus B}(x)\Vert\leq C_a\Vert \mathbf{1}_{A'\cap B}+(1+\delta)\mathbf{1}_{B\setminus A'}\Vert.$$
	Hence, taking the limits when $\delta$ goes to $0$, we prove \eqref{alm1}. Taking into account that \eqref{alm1} does not change if we apply the estimate for $\lbrace\varepsilon'_n e_n\rbrace_n$ for any $\vert\varepsilon'\vert=1$, we can assume that $\varepsilon' \equiv 1$.

	Now, to conclude the result, we realize that $\mathbf{1}_{\varepsilon A}\in 2S$ for the real case and $\mathbf{1}_{\varepsilon A}\in 4S$ for the complex case, where 
	$$S=\lbrace \sum_{A'\subset A}\vartheta_{A'}\mathbf{1}_{A'} : \sum_{A'\subset A}\vert \vartheta_{A'}\vert\leq 1\rbrace.$$
	Hence, applying this remark in \eqref{alm1} and \cite[Lemma 6.4]{DKO},
	$$\Vert \mathbf{1}_{\varepsilon A}\Vert\leq 2\kappa C_a\Vert \mathbf 1_{\varepsilon' B}\Vert.$$

	This completes the $w$-super-democracy. To show the $w$-disjoint-super-democracy, take $A, B, \varepsilon, \varepsilon'$ as in the beginning with $A\cap B=\emptyset$. Define the element $x:=\mathbf{1}_{\varepsilon A}+(1+\delta)\mathbf{1}_{\varepsilon' B}$ with $\delta>0$. Hence, the set $B$ is the greedy set of $x$ with cardinality $m:=\vert B\vert$. Thus,
	$$\Vert \mathbf{1}_{\varepsilon A}\Vert=\Vert x-\mathcal{G}_m(x)\Vert\leq C_a\Vert x-P_A(x)\Vert = C_a\Vert (1+\delta)\mathbf{1}_{\varepsilon' B}\Vert.$$
	Taking limits when $\delta$ goes to $0$, we obtain that $\mathcal B$ is $C_{sd}$-$w$-disjoint-super-democratic with $C_{sd}\leq C_a$.
	
		The proof of quasi-greediness is trivial since in the definition of $w$-almost-greediness we can take $A=\emptyset$.
	
	b) Assume now that $\mathcal B$ is $C_q$-quasi-greedy and $C_{sd}$-$w$-disjoint-super-democratic. Take $m\in\mathbb N$, $\mathcal G_m(x)=P_A(x)$ and $B$ such that $w(B)\leq w(A)$ and $\Vert x-P_B(x)\Vert<\tilde{\sigma}_{w(A)}^w(x)+\delta$ with $\delta>0$. We have the following decomposition:
	$$x-P_A(x)=P_{(A\cup B)^c}(x-P_B(x))+P_{B\setminus A}(x).$$
	
	On the one hand, since $A\setminus B$ is a greedy set of $x-P_B(x)$,
	\begin{eqnarray}\label{one}
	\Vert P_{(A\cup B)^c}(x-P_B(x))\Vert \leq C_q\Vert x-P_B(x)\Vert.
	\end{eqnarray}
	
	On the other hand, since $w(B\setminus A)\leq w(A\setminus B)$, using Lemma \ref{conv} and $w$-disjoint-super-democracy with $\varepsilon\equiv\lbrace\sgn (e_j^*(x))\rbrace$,
	$$\Vert P_{B\setminus A}(x)\Vert \leq C_{sd}\max_{j\in B\setminus A}\vert e_j^*(x)\vert\Vert\mathbf{1}_{\varepsilon(A\setminus B)}\Vert\leq C_{sd}\min_{j\in A\setminus B}\vert e_j^*(x)\vert\Vert\mathbf{1}_{\varepsilon(A\setminus B)}\Vert$$
	
	Now, by Lemma \ref{propC}, using that $A\setminus B$ is a greedy set of $x-P_B(x)$ and $\min_{j\in A\setminus B}\vert e_j^*(x)\vert = \min_{j\in A\setminus B}\vert e_j^*(x-P_B(x))\vert$,
	\begin{eqnarray}\label{two}
	\Vert P_{B\setminus A}(x)\Vert \leq C_{sd}\min_{j\in A\setminus B}\vert e_j^*(x-P_B(x))\vert\Vert\mathbf{1}_{\varepsilon(A\setminus B)}\Vert\leq 2C_qC_{sd}\Vert x-P_B(x)\Vert.
	\end{eqnarray}
	
	By \eqref{one} and \eqref{two}, we obtain that $\mathcal B$ is $C_a$-$w$-almost-greedy with $C_a\leq C_q+2C_qC_{sd}$.
	
\end{proof}
	
\begin{proof}[Proof of Theorem \ref{main}]:
Assume that $\mathcal B$ is $C_{sg}$-$w$-semi-greedy. To exhibit the $w$-super-democracy and quasi-greediness we consider the different cases that we have considered in the Proposition \ref{proop}.

\item \textbf{Case 1:} $\sum_{n=1}^\infty w_n = \infty$ and $\sup_n w_n <\infty$.

\begin{itemize}
	\item To prove quasi-greediness, take $x\in\mathbb X$ such that  $\vert \supp(x)\vert<\infty$, and without loss of generality we can assume that $\max_j \vert e_j^*(x)\vert \leq 1$, $m\in\mathbb N$ and consider that $w(A_m(x))>\lim\sup_{n\rightarrow\infty} w_n$. Since $\sum_{n}w_n = \infty$, we can choose $E$ and $n_0\in\mathbb N$ with $E> \supp(x)$ and $n_0>\max_{i\in E} i$ such that $$w (E)\leq w(A_m(x))<w(E)+w_{n_0}.$$
	Set $F:=E\cup\lbrace n_0\rbrace$ and $\alpha=\min_{j\in A_m(x)}\vert e_j^*(x)\vert$. Define the element $$y:= (x-\mathcal G_m(x))+(\alpha+\delta)\mathbf{1}_F,$$
	with $\delta>0$. Hence, the greedy set of $y$ is $F$ and then, if the scalars $(a_n)_n$ are given by the TCGA,
	\begin{eqnarray*}
		\Vert x-\mathcal G_m(x)\Vert &\leq&\mathfrak{K}_b\Vert x-\mathcal G_m(x)+\sum_{n\in F}a_n e_n\Vert\leq \mathfrak{K}_bC_{sg} \sigma_{w(F)}^w(y)\\
		&\leq& C_{sg} \mathfrak{K}_b\Vert (x-\mathcal G_m(x))+\sum_{i\in A_m(x)}e_i^*(x)e_i +(\alpha + \delta)\mathbf{1}_F\Vert\\
		&=& C_{sg} \mathfrak{K}_b\Vert x+(\alpha + \delta)\mathbf{1}_F\Vert.
	\end{eqnarray*}
	Taking limits when $\delta$ goes to $0$, 
	\begin{eqnarray}\label{ine1}
	\Vert x-\mathcal G_m(x)\Vert \leq C_{sg} \mathfrak{K}_b(\Vert x\Vert +\Vert \alpha \mathbf{1}_E\Vert +\alpha\Vert e_{n_0}\Vert).
	\end{eqnarray}
	Of course, $\alpha\Vert e_{n_0}\Vert \leq c_2^2\Vert x\Vert$, so we only have to estimate $\Vert \alpha \mathbf{1}_E\Vert$. For that, using Lemma \ref{lemma1}, 
	\begin{eqnarray}
	\alpha\Vert \mathbf{1}_E\Vert \leq C_{sg}(1+\mathfrak{K}_b)\Vert x\Vert.
	\end{eqnarray}

	Then, we have that the basis is quasi-greedy for elements with finite support with 
	$$\Vert x-\mathcal G_m(x)\Vert \leq C_{sg} \mathfrak{K}_b(1+(1+\mathfrak{K}_b)C_{sg} +c_2^2)\Vert x\Vert.$$
	
Define now $C_1= C_{sg} \mathfrak{K}_b(1+(1+\mathfrak{K}_b)C_{sg} +c_2^2)$. To show the quasi-greediness for any $x\in\mathbb X$, we need the following result (see \cite[Lemma 2.2]{Oik}): if $x\in\X$ and $A_m(x)$ is the greedy set of cardinality $m$ of $x$, then for any $\e>0$ there exists $y\in\X$ with $\vert \supp(y)\vert<\infty$ such that $\Vert x-y\Vert<\e$ and $A_m(x)=A_m(y)$. Using that, we proceed as follows:

\begin{eqnarray*}
\Vert x-\mathcal G_m(x)\Vert &\leq& \Vert x-y\Vert + \Vert y-\mathcal G_m(y)\Vert + \Vert \mathcal G_m(y)-\mathcal G_m(x)\Vert\\
&=& \Vert x-y\Vert + \Vert P_{A_m(x)}(x-y)\Vert + C_1\Vert y\Vert\\
&\leq& \Vert x-y\Vert(1+\Vert P_{A_m(x)}\Vert)+C_1\Vert x-y\Vert + C_1\Vert x\Vert\\
&\leq& \Vert x-y\Vert(1+\Vert P_{A_m(x)}\Vert+C_q)+C_1\Vert x\Vert\\
&\leq&\e(1+\Vert P_{A_m(x)}\Vert+C_q)+C_1\Vert x\Vert
\end{eqnarray*}
Taking now limits when $\e$ goes to $0$, we obtain that $\mathcal B$ is $C_q$-quasi-greedy with $C_q\leq C_1$.
	
	Now, consider that $w(A_m(x))\leq \lim\sup_{n\rightarrow\infty} w_n$. Using Proposition \ref{p:find c0}, $$\max_{\vert\varepsilon\vert=1}\Vert\mathbf{1}_{\varepsilon A_m(x)}\Vert\leq 2C_{sg}\mathfrak{K}_b.$$
	
Then, using convexity, $$\Vert\mathcal G_m(x)\Vert \leq \max_{j}\vert e_j^*(x)\vert2C_{sg}\mathfrak{K}_b\leq 2c_2C_{sg}\mathfrak{K}_b\Vert x\Vert.$$
Hence, $\mathcal B$ is quasi-greedy with $C_q \leq 2c_2C_{sg}\mathfrak{K}_b+1$.
	
	\item To show the $w$-super-democracy in this case, take $A, B\in\mathbb N^{<\infty}$ such that $w(A)\leq w(B)$ and two signs $\varepsilon, \varepsilon'$.
If $w(B)>\lim\sup_{n\rightarrow \infty} w_n$, we tan take the set $F$ as before, that is, $F=E\cup\lbrace n_0\rbrace$ such that $w(E)\leq w(B)<w(F)$, $n_0>\max E$ and  $E>A\cup B$. Then, taking the element $x:=\mathbf{1}_{\varepsilon A}+(1+\delta)\mathbf{1}_F$, with $\delta>0$, the greedy set of $x$ is $F$. Using the scalars $(a_i)_{i\in F}$ given by the TCGA, we have that 
	$$\Vert\mathbf{1}_{\varepsilon A}\Vert\leq \mathfrak{K}_b\left\Vert \mathbf{1}_{\varepsilon A}+\sum_{i\in F}a_i e_i\right\Vert \leq \mathfrak{K}_bC_{sg}\sigma_{w(F)}^w(x)\leq \mathfrak{K}_bC_{sg}\Vert (1+\delta)\mathbf{1}_F\Vert.$$
	
	Taking $\delta \searrow 0$, $\Vert \mathbf{1}_{\varepsilon A}\Vert \leq \mathfrak{K}_bC_{sg} (\Vert \mathbf{1}_E\Vert + c_2^2\Vert \mathbf{1}_{\eta B}\Vert)$. Now, as $E>B$, taking the element $x:=(1+\delta)\mathbf{1}_{\varepsilon' B}+\mathbf{1}_E$, using the same ideas that before with $w(B)\geq w(E)$, we obtain that
	$$\Vert \mathbf{1}_E\Vert \leq 2\mathfrak{K}_bC_{sg} \Vert\mathbf{1}_{\varepsilon' B}\Vert.$$
	
	Hence, the basis is $w$-super-democratic with constant $C_{s}\leq 2\mathfrak{K}_b^2C_{sg}^2 + \mathfrak{K}_bC_{sg} c_2^2$.
	
	If $w(B)\leq \lim\sup_{n\rightarrow \infty}w_n$, using Proposition \ref{p:find c0},
	$$\Vert \mathbf{1}_{\varepsilon A}\Vert \leq 2c_2C_{sg}\mathfrak{K}_b\Vert\mathbf{1}_{\varepsilon' B}\Vert.$$
\end{itemize}
		\item 	\textbf{Case 2:} If $\sum_n w_n<\infty$ or $\sup_n w_n =\infty$, using Proposition \ref{p:find c0}, $\mathcal B$ is equivalent to the canonical basis of $c_0$ and the result is trivial.

	The item a) is proved. Now, we show b). Assume that $\mathcal B$ is $C_q$-quasi-greedy and $C_{sd}$-$w$-disjoint-super-democratic. Take $m\in\mathbb N$, $\supp(\mathcal G_m(x))=A_m(x)$ and $z=\sum_{n\in B}a_n e_n$ such that $\Vert x-z\Vert<\sigma_{w(A_m(x))}^w(x)+\delta$ with $\delta>0$.
	If $\alpha = \max_{j\not \in A_m(x)}\vert e_j^*(x)\vert$, we take the element $\nu$ as is defined in \cite{DKK}:
	$$\nu:=\sum_{i\in A_m(x)}T_\alpha(y_i)e_i + P_{(A_m(x))^c}(x) = \sum_{i=1}^\infty T_\alpha(y_i)e_i + \sum_{i\in B\setminus A_m(x)}(e_i^*(x)-T_\alpha(y_i))e_i,$$
	where $y_i=e_i^*(x)-a_i$. Of course, $\nu$ satisfies that $\supp(x-\nu)\subset A_m(x)$ and we will prove that $\Vert \nu\Vert \leq (C_q + 4C_qC_{sd})\Vert x-z\Vert$. 
	
	One the one hand, using Proposition \ref{truncation}, 
	\begin{eqnarray}\label{on1}
	\Vert\sum_{i=1}^\infty T_\alpha(y_i)e_i\Vert \leq C_q\Vert x-z\Vert.
	\end{eqnarray}
	
	On the other hand, since $\vert e_i^*(x)-T_\alpha(y_i)\vert \leq 2\alpha$ for all $i\in B\setminus A_m(x)$, using $C_{sd}$-$w$-disjoint-super-democracy with $\eta\equiv\lbrace\sgn(e_j^*(x-z))\rbrace$, $w(B)\leq w(A_m(x))$ and Lemma \ref{conv},
	\begin{eqnarray}\label{key}
	\Vert \sum_{i\in B\setminus A_m(x)}(e_i^*(x)-T_\alpha(y_i))e_i\Vert &\leq& 2\alpha C_{sd}\Vert\mathbf{1}_{\eta (A_m(x)\setminus B)}\Vert\\\nonumber
	&\leq& 2C_{sd}\min_{i\in A_m(x)\setminus B}\vert e_i^*(x)\vert\Vert\mathbf{1}_{\eta (A_m(x)\setminus B)}\Vert.
	\end{eqnarray}

Now, if we take $\Lambda:=\lbrace n : \vert e_n^*(x-z)\vert \geq \min_{i\in A_m(x)\setminus B}\vert e_i^*(x)\vert, n\not\in A_m(x)\setminus B\rbrace$, $C=\Lambda\cup (A_m(x)\setminus B)$ is a greedy set of $x-z$. Using quasi-greediness,
$$\Vert \mathbf{1}_{\eta(A_m(x)\setminus B)}\Vert \leq C_q\Vert\mathbf{1}_{\eta C}\Vert.$$
Finally, using this fact and Proposition \ref{propC},
\begin{eqnarray*}
2C_{sd}\min_{i\in A_m(x)\setminus B}\vert e_j^*(x)\Vert	\mathbf{1}_{\eta (A_m(x)\setminus B)}\Vert&\leq& 2C_qC_{sd}\min_{i\in C}\vert e_i^*(x-z)\vert\Vert\mathbf{1}_{\eta C}\Vert\\
&\leq& 4C_{sd}C_q^2\Vert x-z\Vert.
\end{eqnarray*}
	
	Hence, the basis $\mathcal B$ is $C_{sg}$-$w$-semi-greedy with $C_{sg}\leq C_q + 4C_{sd}C_q^2$ and b) is finished. Now, we proved the item c). For that, we only have to estimate the inequality \eqref{key} in a different way. Consider that the basis is $C_q$-quasi-greedy and $C_s$-$w$-super-democratic and take the sets $A_m(x), B,C$ and $\eta=\lbrace\sgn (e_j^*(x-z))\rbrace$ as in b). It is clear that $w(B\setminus A_m(x))\leq w(A_m(x)\setminus B)\leq w(C)$, hence, applying the $w$-super-democracy, Lemma \ref{conv} and Proposition \ref{propC} in \eqref{key},
		\begin{eqnarray*}
		\Vert \sum_{i\in B\setminus A_m(x)}(e_i^*(x)-T_\alpha(y_i))e_i\Vert &\leq& 2\alpha C_{s}\Vert\mathbf{1}_{\eta C}\Vert\\
		&\leq& 2C_{s}\min_{i\in A_m(x)\setminus B}\vert e_i^*(x)\vert\Vert\mathbf{1}_{\eta C}\Vert\\
		&=&2C_{s}\min_{i\in C}\vert e_i^*(x-z)\vert\Vert\mathbf{1}_{\eta C}\Vert\\
		&\leq & 4C_{s}C_q\Vert x-z\Vert.
	\end{eqnarray*}

Thus, the basis is $C_{sg}$-$w$-semi-greedy with $C_{sg}\leq C_q+4C_qC_s$.
This completes the proof.
	
\end{proof}

\end{section}

\section{$\rho$-admissibility and semi-greediness}\label{rho}

In the most papers where we study some characterization about greedy-type bases, the more general stage involves only the condition of (strong) Markushevich bases. Then, a natural question is if we can remove the condition of Schauder basis in Theorem \ref{main}. This question is so closed to the Question 1 established in \cite{B}. Here, we present a weaker condition than Schauder to give a characterization of semi-greediness<, that is, the version of Theorem \ref{main} for the weight $w=(1,1,...)$. For that purpose, we consider the following definition that we can find in \cite{BBGHO2}.

\begin{defn}
	For $\rho\geq 1$, we say that $(e_n)_{n=1}^\infty$ is \textbf{$\rho$-admissible} if the following holds: for each finite set $A\subset \mathbb N$, there exists $n_0=n_0(A)$ such that, for all sets $B$ with $\min_{i\in B}i \geq n_0$ and $\vert B\vert\leq\vert A\vert$,
	$$\left\Vert \sum_{n\in A} \alpha_n e_n\right\Vert \leq \rho\left\Vert \sum_{n\in A\cup B} \alpha_n e_n\right\Vert,\; \forall \alpha_n\in\mathbb F.$$
\end{defn}

Of course, this condition is satisfied for Schauder bases, but, in fact, it is satisfied for a more general bases. We remind some classical definitions:
\begin{itemize}
	\item  $(e_n)_{n=1}^\infty$ is \textbf{weakly null} if
	$$\lim_{n\rightarrow \infty} x^*(e_n)=0,\; \forall x^*\in\mathbb X^*.$$
	\item  Given $Y\subset\mathbb X^*$, $(e_n)_{n=1}^\infty$ is \textbf{$Y$-null} if
	$$\lim_{n\rightarrow\infty}y(e_n)=0,\; \forall y\in Y.$$
	\item Given $\kappa\in (0,1]$, a set $Y\subset\mathbb X^*$ is \textbf{$\kappa$-norming} whenever
	$$\kappa\Vert x\Vert \leq \sup_{x^*\in Y, \Vert x^*\Vert\leq 1}\vert x^*(x)\vert,\; \forall x\in\mathbb X,$$
\end{itemize}

In \cite{BBGHO2}, we can find the following result.

\begin{prop}
	Let $\lbrace e_n, e_n^*\rbrace_{n=1}^\infty$ be a biorthogonal system in $\mathbb Xx\mathbb X^*$. Suppose that the sequence $\lbrace \tilde{e}_n := \Vert e_n^*\Vert e_n\rbrace_{n=1}^\infty\subset\mathbb X$ is $Y$-null, for some subset $Y\subset\mathbb X^*$ wich is $\kappa$-norming. Then, $\lbrace e_n\rbrace_{n=1}^\infty$ is $\rho$-admissible for every $\rho>1/k$.
\end{prop}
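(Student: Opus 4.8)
The plan is to unwind the definition and reduce $\rho$-admissibility to a statement about norming functionals, then to use the finite-dimensionality of $V_A:=\span\{e_n:n\in A\}$ together with the $Y$-nullity of $(\tilde e_n)_n$ to extract a \emph{single} threshold $n_0=n_0(A)$. First I would fix a finite set $A$ with $|A|=k$ and note that, since $n_0$ may be taken arbitrarily large, I am free to require $n_0>\max A$; any admissible $B$ is then disjoint from $A$. Writing $x=\sum_{n\in A}\alpha_n e_n$ and $z=\sum_{n\in B}\alpha_n e_n$, the inequality to be proved is exactly $\|x\|\le\rho\|x+z\|$, and by joint homogeneity in $(x,z)$ it suffices to treat $x$ in the (compact) unit sphere $S_A$ of $V_A$, with $z\in\span\{e_n:n\in B\}$ arbitrary.

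The elementary engine of the argument is the following estimate: for any $y\in Y$ with $\|y\|\le1$ and any $z=\sum_{n\in B}\beta_n e_n$, the bound $|\beta_n|=|e_n^*(z)|\le\|e_n^*\|\,\|z\|$ gives
\[
|y(z)|\le\sum_{n\in B}|\beta_n|\,|y(e_n)|\le\|z\|\sum_{n\in B}\|e_n^*\|\,|y(e_n)|=\|z\|\sum_{n\in B}|y(\tilde e_n)|,
\]
so a functional that almost norms $x$ and for which $\sum_{n\in B}|y(\tilde e_n)|$ is tiny will almost norm $x+z$ as well. To make the choice of such a functional uniform over $S_A$, I would use compactness: given $\e>0$, the $\kappa$-norming hypothesis supplies, for each $x\in S_A$, some $y_x\in Y$ with $\|y_x\|\le1$ and $|y_x(x)|>\kappa-\e$; by continuity of the linear functional this persists on a neighbourhood of $x$, and a finite subcover of $S_A$ yields functionals $y_1,\dots,y_p\in Y$ such that every $x\in S_A$ is $(\kappa-\e)$-normed by one of them. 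Now I invoke $Y$-nullity for these finitely many functionals: choose $n_0>\max A$ so large that $|y_j(\tilde e_n)|<\delta$ for all $n\ge n_0$ and all $j\le p$. Then for $B$ with $\min_{i\in B}i\ge n_0$ and $|B|\le k$ one has $\sum_{n\in B}|y_j(\tilde e_n)|<k\delta$.

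It remains to combine these ingredients. For $x\in S_A$, picking the relevant $y_j$ and using $\|y_j\|\le1$,
\[
\|x+z\|\ge|y_j(x+z)|\ge|y_j(x)|-|y_j(z)|\ge(\kappa-\e)-k\delta\|z\|.
\]
The only apparent difficulty is that $\|z\|$ is a priori unbounded, but this is harmless since $\|z\|\le\|x+z\|+\|x\|=\|x+z\|+1$; substituting and solving for $\|x+z\|$ gives
\[
\|x+z\|\ge\frac{(\kappa-\e)-k\delta}{1+k\delta},
\]
whose right-hand side tends to $\kappa$ as $\e,\delta\to0$. Hence, for any prescribed $\rho>1/\kappa$, I fix $\e,\delta$ small enough that this quantity is $\ge1/\rho$, which is precisely $\|x\|\le\rho\|x+z\|$. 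The main obstacle is the uniformity of $n_0$: the norming functional depends on $x$, yet $n_0$ must serve all $x\in V_A$ and all admissible $B$ simultaneously. This is exactly what the finite subcover of the compact sphere $S_A$ resolves, reducing the $Y$-nullity requirement to finitely many functionals; the reduction $\|z\|\le\|x+z\|+1$ then disposes of the unboundedness of $z$.
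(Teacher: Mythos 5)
This proposition is imported into the paper from \cite{BBGHO2} and is stated there without proof, so there is no in-paper argument to compare yours against; I can only assess your proof on its own terms, and it is correct and complete. The three points where such an argument could go wrong are all handled properly: the joint homogeneity reduction to $x$ in the unit sphere $S_A$ is legitimate because the defining inequality $\Vert x\Vert\leq\rho\Vert x+z\Vert$ is invariant under scaling the pair $(x,z)$; the uniformity of $n_0$ over all of $V_A$ is genuinely the crux, and your finite subcover of the compact sphere reduces the $Y$-nullity requirement to finitely many functionals $y_1,\dots,y_p$, after which a single threshold works; and the a priori unboundedness of $\Vert z\Vert$ is disposed of by $\Vert z\Vert\leq\Vert x+z\Vert+\Vert x\Vert$, which turns the estimate into an inequality that can be solved for $\Vert x+z\Vert$. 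The bookkeeping of quantifiers is also in the right order: $\varepsilon$ and $\delta$ depend only on $\rho$, $\kappa$ and $k=\vert A\vert$, the functionals $y_j$ on $A$ and $\varepsilon$, and $n_0$ on the $y_j$ and $\delta$, so $n_0=n_0(A)$ as the definition requires. One cosmetic remark: the key estimate $\vert y(z)\vert\leq\Vert z\Vert\sum_{n\in B}\vert y(\tilde e_n)\vert$ is exactly where the normalization $\tilde e_n=\Vert e_n^*\Vert e_n$ in the hypothesis earns its keep, and it is worth saying explicitly that this is why $Y$-nullity is assumed for $(\tilde e_n)$ rather than for $(e_n)$; in the semi-normalized setting of the rest of the paper the two are equivalent, but the proposition as stated does not assume semi-normalization.
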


Some examples of bases that are not Schauder satisfying the above proposition can be found in Section 3 of \cite{BBGHO2}. One of them is the trigonometric system in $C([0,1])$. Our contribution in this case is the following theorem.

\begin{thm}
	Assume that $\mathcal B$ is a $\rho$-admissible basis. Then, $\mathcal B$ is semi-greedy if and only if $\mathcal B$ is quasi-greedy and disjoint-super-democratic.
\end{thm}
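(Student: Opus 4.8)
The plan is to follow the proof of Theorem \ref{main} almost verbatim, the one structural change being that every appeal to the Schauder constant $\mathfrak K_b$ must be replaced by an appeal to $\rho$-admissibility. Two simplifications occur because we work with $w=(1,1,\dots)$: since $\sum_n w_n=\infty$ and $\sup_n w_n=1<\infty$, only Case 1 of those arguments is relevant (Case 2 never occurs), and since $\limsup_n w_n=1$ the subcase $w(A_m(x))\le\limsup_n w_n$ forces $|A_m(x)|\le 1$ and is trivial. Moreover the sufficiency direction is essentially free: the proof of item b) of Theorem \ref{main} uses only the truncation bound (Proposition \ref{truncation}), disjoint-super-democracy, convexity (Lemma \ref{conv}) and the Property (C) inequality (Lemma \ref{propC}), none of which requires $\mathcal B$ to be Schauder. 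Hence \emph{quasi-greedy and disjoint-super-democratic implies $w$-semi-greedy} holds for an arbitrary basis, in particular a $\rho$-admissible one, and it remains only to prove the forward implication.

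For the forward implication I would reprove the quasi-greediness and the super-democracy of item a) of Theorem \ref{main} (super-democracy then trivially gives disjoint-super-democracy). The Schauder hypothesis entered only through estimates of the shape $\Vert P_A v\Vert\le \mathfrak K_b\Vert v\Vert$ for an \emph{initial} block $A$ and $\Vert P_F v\Vert\le(1+\mathfrak K_b)\Vert v\Vert$ for a \emph{far} block $F$, valid whenever $v$ is supported on $A\cup F$ with $A<F$. The replacement is immediate: for such $v$, $\rho$-admissibility gives $\Vert P_A v\Vert\le\rho\Vert v\Vert$ directly, and since $P_F v=v-P_A v$ it gives $\Vert P_F v\Vert\le(1+\rho)\Vert v\Vert$ for the tail. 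Thus $\mathfrak K_b$ becomes $\rho$ in the initial estimates and $1+\mathfrak K_b$ becomes $1+\rho$ in the tail estimates; in particular Lemma \ref{lemma1} reads
$$\min_{i\in G}|e_i^*(x)|\,\Vert\mathbf 1_{\eta F}\Vert\le C_{sg}(1+\rho)\Vert x\Vert,$$
and its proof is unchanged, because there the appended block $F$ obeys $|F|\le|G|\le|\supp(x)|$, so the cardinality restriction built into $\rho$-admissibility is automatically met.

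The one genuine obstacle is precisely that cardinality restriction: $\rho$-admissibility controls $\Vert P_A v\Vert$ only when the appended block $B$ satisfies $|B|\le|A|$. In the quasi-greediness estimate the protected initial block is $A=\supp(x-\mathcal G_m(x))$ while the far block $F=E\cup\{n_0\}$ has $|F|=m+1$, and in the super-democracy estimate the protected block $A$ satisfies $|A|\le|B|$ while again $|F|=|B|+1$; in both situations $|F|$ can exceed $|A|$. I would resolve this by \emph{zero-padding}: enlarge $A$ to a finite set $\tilde A\supseteq A$ by adjoining indices that lie below $F$ and carry zero coefficient in the vector $v$ under consideration, until $|\tilde A|\ge|F|$. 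Since the adjoined coefficients vanish, $P_{\tilde A}v=P_A v$, so applying $\rho$-admissibility to the pair $(\tilde A,F)$ still gives $\Vert P_A v\Vert\le\rho\Vert v\Vert$. The choices must be ordered correctly: first fix $\tilde A$, then read off $n_0(\tilde A)$, and only then choose $E$ and $n_0$ with $\min F\ge n_0(\tilde A)$ and $F$ disjoint from $\tilde A$, which is possible since $E,n_0$ may be taken arbitrarily far out. With this device each step of Case 1 in the proof of Theorem \ref{main} a) reproduces, yielding quasi-greediness with $C_q=O(\rho^2C_{sg}^2)$ and super-democracy with $C_s=O(\rho^2C_{sg}^2)$; the passage from finite support to all of $\mathbb X$ goes exactly as before through the approximation lemma of \cite{Oik}, which uses only the boundedness of the finite-rank projection $P_{A_m(x)}$ (automatic for any Markushevich basis). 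The auxiliary bounds invoked from Proposition \ref{p:find c0} are its general, non-Schauder versions, already proved there for semi-greedy bases, so the trivial singleton subcase requires no change. Combining the two implications gives the stated equivalence.
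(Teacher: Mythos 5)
Your proposal is correct and follows essentially the same route as the paper: the paper's proof also reruns the argument of Theorem \ref{main} with $\mathfrak K_b$ replaced by $\rho$ (and $1+\mathfrak K_b$ by $1+\rho$ for the tail blocks), meets the cardinality restriction of $\rho$-admissibility by taking the protected set to be $\supp(x)$ (resp.\ $A\cup B$) with zero coefficients filled in — exactly your zero-padding device — and relies on the fact that the sufficiency direction of Theorem \ref{main} never uses the Schauder hypothesis. Your constants $C_q,C_{sd}=O(\rho^2C_{sg}^2)$ match the paper's $C_q\le C_{sg}\rho(1+(1+\rho)C_{sg})$ and $C_{sd}\le C_{sg}^2\rho(1+\rho)$.
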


\begin{proof}
	We only have to show that semi-greediness implies quasi-greediness and disjoint-super-democracy. The ideas that we use are the same than in Theorem \ref{main} (and \cite[Theorem 1.10]{B}) applying the condition of $\rho$-admissibility.
	
	First, we show super-democracy. Take two sets $A$ and $B$ such that $\vert A\vert\leq \vert B\vert$, $A\cap B=\emptyset$ and two signs $\varepsilon, \eta$. Since the basis is $\rho$-admissible, we can find a set $F$ such that $\vert F\vert=\vert A\cup B\vert$  and $F>A\cup B$. Now, select a set $C\subset F$ such that $\vert C\vert = \vert A\vert$. Hence, 
	\begin{eqnarray}\label{super1}
	\left\Vert \sum_{n\in A\cup B} \alpha_n e_n\right\Vert \leq \rho\left\Vert \sum_{n\in A\cup B\cup C} \alpha_n e_n\right\Vert,\; \forall \alpha_n\in\mathbb F.
	\end{eqnarray}
	
	Consider the element $x:=\mathbf{1}_{\varepsilon A} +(1+\delta)\mathbf{1}_C$, with $\delta>0$. Using the TCGA,
	$$x-\mathcal{CG}_m(x)=\mathbf{1}_{\varepsilon A} + \sum_{i\in C} a_i e_i.$$
	Using semi-greediness and \eqref{super1} with $\alpha_n = \varepsilon_n$ if $n\in A$, $\alpha_n = 0$ if $n\in B$ and $\alpha_n = a_n$ if $n\in C$,
	$$\Vert \mathbf{1}_{\varepsilon A}\Vert \leq \rho \Vert \mathbf{1}_{\varepsilon A} + \sum_{i\in C} a_i e_i\Vert \leq C_{sg}\rho \Vert x-\mathbf{1}_{\varepsilon A}\Vert = C_{sg}\rho\Vert (1+\delta)\mathbf{1}_C\Vert.$$
	
	Taking limits when $\delta$ goes to $0$, we obtain that
	\begin{eqnarray}\label{super2}
	\Vert\mathbf{1}_{\varepsilon A}\Vert \leq C_{sg}\rho\Vert \mathbf{1}_C\Vert.
	\end{eqnarray}
	
	Now, consider $y:=(1+\delta)\mathbf{1}_{\eta B} +\mathbf{1}_C$ with $\delta>0$, 
	$$y-\mathcal{CG}_m(y)=\sum_{i\in B} b_i e_i + \mathbf{1}_C.$$
	
	As before, using semi-greediness and \eqref{super1} with $\alpha_n = 0$ if $n\in A$, $\alpha_n = b_n$ if $n\in B$ and $\alpha_n = 1$ if $n\in C$,
	
	\begin{eqnarray*}
	\Vert \mathbf{1}_{ C}\Vert \leq (1+\rho) \Vert \mathbf{1}_{ C} + \sum_{i\in B} b_i e_i\Vert \leq C_{sg}(1+\rho) \Vert x-\mathbf{1}_{C}\Vert = C_{sg}(1+\rho)\Vert (1+\delta)\mathbf{1}_{\eta B}\Vert.
	\end{eqnarray*}
	
	Taking limits when $\delta$ goes to $0$ and \eqref{super2}, we obtain that
	$$\Vert \mathbf{1}_{\varepsilon A}\Vert \leq C_{sg}^2 \rho(1+\rho)\Vert\mathbf{1}_{\eta B}\Vert.$$
	Thus, the basis is $C_{sd}$-disjoint-super-democratic with $C_{sd}\leq C_{sg}^2\rho(1+\rho)$.
	
	Now, we prove quasi-greediness. Since our basis is strong Markushevich, it is enough to consider $x\in\mathbb X$ with finite support $A=\supp(x)$ as we have said in Theorem \ref{main}. Using the $\rho$-admissibility, we can find a set $C$ such that $\vert C\vert = \vert A\vert$, $C>A$ and
	\begin{eqnarray}\label{semi1}
	\Vert \sum_{n\in A}\alpha_n e_n\Vert \leq \rho\Vert \sum_{n\in A\cup C}\alpha_n e_n\Vert,\; \forall \alpha_n\in\mathbb F.
	\end{eqnarray}
	
	Take $m\in\mathbb N$ and $\delta>0$. Define the element $y:= (x-\mathcal G_m(x))+(\alpha+\delta)\mathbf{1}_F$, where $F\subset C$ with $\vert F\vert=m$, $\alpha = \min_{j\in A_m(x)}\vert e_j^*(x)\vert$ and $A_m(x)$ is the greedy set of $x$ with cardinality $m$. Then, using the TCGA,
	$$y-\mathcal{CG}_m(y) = (x-\mathcal G_m(x))+\sum_{i\in F}a_i e_i.$$
	Using semi-greediness and \eqref{semi1} with $\alpha_n = 0$ if $n\in A_m(x)$, $\alpha_n = e_n^*(x)$ if $n\in A\setminus A_m(x)$, $\alpha_n = a_n$ if $n\in F$ and $\alpha_n = 0$ if $n\in C\setminus F$, 
	$$\Vert x-\mathcal G_m(x)\Vert \leq \rho\Vert y-\mathcal{CG}_m(y)\Vert\leq C_{sg}\sigma_{m}(y) \leq C_{sg}\rho(\Vert x\Vert + \Vert(\alpha+\delta)\mathbf 1_F\Vert).$$
	
	Taking limits when $\delta$ goes to $0$,
	\begin{eqnarray}\label{proof_1}
	\Vert x-\mathcal G_m(x)\Vert \leq C_{sg}\rho(\Vert x\Vert + \Vert \alpha \mathbf{1}_F\Vert).
	\end{eqnarray}
	 Now, take $\eta\equiv\lbrace \sgn(e_i^*(x))\rbrace$ and define $z:=\sum_{i\in A_m(x)}(e_i^*(x)+\delta\eta_i)e_i + P_{(A_m(x))^c}(x)+\alpha \mathbf 1_F$ for $\delta>0$. Thus, by TCGA,
	$$z-\mathcal{CG}_m(z) = \sum_{i\in A_m(x)}b_i e_i + P_{(A_m(x))^c}(x)+\alpha \mathbf{1}_F.$$
	
	Again, using semi-greediness and \eqref{semi1} with $\alpha_n = b_n$ if $n\in A_m(x)$, $\alpha_n = e_n^*(x)$ if $n\in A\setminus A_m(x)$, $\alpha_n = \alpha$ if $n\in F$ and $\alpha_n = 0$ if $n\in C\setminus F$,
	
	\begin{eqnarray*}
		\Vert \alpha \mathbf{1}_F\Vert &\leq& (1+\rho)\Vert  \sum_{i\in A_m(x)}b_i e_i + P_{(A_m(x))^c}(x)+\alpha 1_F\Vert \leq C_{sg}(1+\rho)\Vert z-\alpha \mathbf{1}_F\Vert\\ 
		&=& C_{sg}(1+\rho)\Vert \sum_{i\in A_m(x)}(e_i^*(x)+\delta\eta_i)e_i + P_{(A_m(x))^c}(x)\Vert.
	\end{eqnarray*}
	
	Taking limits when $\delta$ goes to $0$, 
	
	\begin{eqnarray}\label{proof_2}
		\Vert \alpha \mathbf{1}_F\Vert\leq C_{sg}(1+\rho)\Vert x\Vert
	\end{eqnarray}
	
By \eqref{proof_1} and \eqref{proof_2}, $\mathcal B$ is $C_q$-quasi-greedy with $C_q\leq C\rho(1+(1+\rho)C_{sg})$.
\end{proof}


\begin{rem}
	We have studied the characterization of semi-greediness using the $\rho$-admissibility. But, at the moment, we don't know if it is possible to prove the same characterization for $w$-semi-greediness since the condition of the $\rho$-admissibility talks about the cardinality over the sets and not over the weights.
\end{rem}

 \section{Final comments}\label{final}
 In this last section we will discuss two questions. The first one is to show that Remark \ref{rem1} is an improvement respect to the bound of Theorem \ref{walm}. To proved that, we establish the following result that is the weighted version of \cite[Lemma 3.5]{BBG}.
 
 \begin{prop}\label{proprem}
 Assume that $\mathcal B$ is a basis in a Banach space $\mathbb X$. If $\mathcal B$ is $C_{d}$-$w$-democratic and $C_q$-quasi-greedy, then $\mathcal B$ is $C_{s}$-$w$-super-democratic with $C_{s}\leq 4\kappa^2 C_qC_d$, where $\kappa=1$ if $\mathbb F=\mathbb R$ and $\kappa=2$ if $\mathbb F=\mathbb C$.
 \end{prop}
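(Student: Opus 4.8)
The plan is to reduce the desired inequality $\Vert\mathbf{1}_{\varepsilon A}\Vert\leq 4\kappa^2 C_qC_d\Vert\mathbf{1}_{\varepsilon' B}\Vert$ (for $w(A)\leq w(B)$ and arbitrary signs $\varepsilon,\varepsilon'$) to two one-sided estimates: a \emph{left} estimate bounding $\Vert\mathbf{1}_{\varepsilon A}\Vert$ by the unsigned norm $\Vert\mathbf{1}_B\Vert$, and a \emph{right} estimate, namely the unconditionality for constant coefficients, bounding $\Vert\mathbf{1}_B\Vert$ by $\Vert\mathbf{1}_{\varepsilon' B}\Vert$. The essential idea, which is what saves a factor of $C_q$ and produces the order $O(C_qC_d)$ rather than the naive $O(C_q^2C_d)$, is to arrange that \emph{quasi-greediness is used only once}, on the right-hand side; the left set $A$ will be handled purely by splitting and democracy.

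For the left estimate I would argue as follows. In the real case, split $A=A^+\cup A^-$ with $A^+=\lbrace n:\varepsilon_n=1\rbrace$ and $A^-=\lbrace n:\varepsilon_n=-1\rbrace$, so that $\Vert\mathbf{1}_{\varepsilon A}\Vert\leq\Vert\mathbf{1}_{A^+}\Vert+\Vert\mathbf{1}_{A^-}\Vert$. Since $A^\pm\subset A$ we have $w(A^\pm)\leq w(A)\leq w(B)$, and $C_d$-democracy applied to each subset gives $\Vert\mathbf{1}_{\varepsilon A}\Vert\leq 2C_d\Vert\mathbf{1}_B\Vert$, with no appeal to quasi-greediness. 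In the complex case I would instead write $\varepsilon_n=a_n+ib_n$ with $a_n,b_n\in[-1,1]$, so that $\mathbf{1}_{\varepsilon A}=\sum_{n\in A}a_ne_n+i\sum_{n\in A}b_ne_n$; each summand has real coefficients of modulus $\leq 1$, hence by Lemma \ref{conv} it belongs to $\text{co}(\lbrace\mathbf{1}_{\theta A}:\theta\in\lbrace\pm1\rbrace^A\rbrace)$ and has norm at most $\max_\theta\Vert\mathbf{1}_{\theta A}\Vert\leq 2C_d\Vert\mathbf{1}_B\Vert$ by the real splitting just described. This yields $\Vert\mathbf{1}_{\varepsilon A}\Vert\leq 2\kappa C_d\Vert\mathbf{1}_B\Vert$ in both cases.

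For the right estimate I would establish $\Vert\mathbf{1}_B\Vert\leq 2\kappa C_q\Vert\mathbf{1}_{\varepsilon' B}\Vert$. In the real case, put $D=\lbrace n\in B:\varepsilon'_n=-1\rbrace$ and write $\mathbf{1}_B=\mathbf{1}_{B\setminus D}+\mathbf{1}_D=P_{B\setminus D}(\mathbf{1}_{\varepsilon' B})-P_D(\mathbf{1}_{\varepsilon' B})$. Because $\mathbf{1}_{\varepsilon' B}$ has unimodular coefficients, \emph{every} subset of $B$ is a greedy set of it, so both $P_{B\setminus D}(\mathbf{1}_{\varepsilon' B})$ and $P_D(\mathbf{1}_{\varepsilon' B})$ are complements of greedy sums; $C_q$-quasi-greediness bounds each of them by $C_q\Vert\mathbf{1}_{\varepsilon' B}\Vert$, giving $\Vert\mathbf{1}_B\Vert\leq 2C_q\Vert\mathbf{1}_{\varepsilon' B}\Vert$. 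The complex case follows by expanding $e_n=\overline{\varepsilon'_n}(\varepsilon'_ne_n)$, splitting $\overline{\varepsilon'_n}$ into real and imaginary parts, and applying Lemma \ref{conv} together with the previous estimate, which contributes the extra factor $\kappa=2$. Combining the two estimates gives $\Vert\mathbf{1}_{\varepsilon A}\Vert\leq 2\kappa C_d\cdot 2\kappa C_q\Vert\mathbf{1}_{\varepsilon' B}\Vert=4\kappa^2C_qC_d\Vert\mathbf{1}_{\varepsilon' B}\Vert$.

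The main obstacle is not conceptual but lies in the constant bookkeeping, specifically in keeping quasi-greediness confined to the right estimate so that only a single power of $C_q$ appears; the symmetric temptation of passing from signed to unsigned norms on both $A$ and $B$ is exactly what must be avoided. The complex case is the most delicate part, since the decomposition of unimodular signs into real and imaginary parts must be carried out so that Lemma \ref{conv} is applied to genuinely real coefficients in $[-1,1]$, and one has to verify that the factor $\kappa=2$ enters exactly once on each side rather than compounding.
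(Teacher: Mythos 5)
Your proposal is correct and follows essentially the same route as the paper: split $A$ by sign and apply $w$-democracy to each piece (giving the factor $2\kappa C_d$), then split $B$ by sign and apply quasi-greediness to each projection of $\mathbf{1}_{\varepsilon' B}$ (giving the factor $2\kappa C_q$), so that $C_q$ enters only once. The only cosmetic difference is that you handle the complex case by real/imaginary decomposition plus the convexity lemma, where the paper cites the absolutely-convex-hull lemma of Dilworth--Kutzarova--Oikhberg; both yield the same factor $\kappa^2$.
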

 \begin{proof}
 First, we prove the result for the real case. Consider $A, B$ two sets with $w(A)\leq w(B)$ and two signs $\varepsilon, \eta$. If we denote by $A^{\pm} = \lbrace n\in A : \varepsilon_n = \pm 1\rbrace$, using the democracy with $w(A^{\pm})\leq w(A)\leq w(B)$ and quasi-greediness,
 \begin{eqnarray}\label{demo1}
 \Vert \mathbf{1}_{\varepsilon A}\Vert \leq \Vert \mathbf{1}_{A^+}\Vert + \Vert\mathbf{1}_{A^-}\Vert \leq 2C_d\Vert \mathbf{1}_B\Vert.
 \end{eqnarray}
 Now, we decompose $B$ as the set $A$, that is, $B^{\pm}= \lbrace n\in B : \eta_n = \pm 1\rbrace$. Hence, using quasi-greediness, 
 \begin{eqnarray}\label{demo2}
 \Vert \mathbf{1}_B\Vert\leq \Vert \mathbf{1}_{B^+}\Vert + \Vert\mathbf{1}_{B^-}\Vert\leq 2C_q\Vert \mathbf{1}_{\eta B}\Vert.
 \end{eqnarray}
 Then, by \eqref{demo1} and \eqref{demo2}, the basis is $C_{s}$-$w$-super-democratic with $C_{s}\leq 4C_qC_d$. For the complex case, we can proceed using \cite[Lemma 6.4]{DKO} as in Theorem \ref{main1} to conclude that $\mathcal B$ is $C_{s}$-$w$-super-democratic with $C_{s}\leq 4\kappa^2C_qC_d$.
 \end{proof}

 The second question that we study is related to $w$-super-democracy and $w$-disjoint-super-democracy. We know that, if $w=(1,1,...)$, that is, $w(A)=\vert A\vert$, a basis $\mathcal B$ is super-democratic if and only if $\mathcal B$ is disjoint-super-democratic. Quantitatively,
 \begin{itemize}
 	\item If $\mathcal B$ is $C_s$-super-democratic, then $\mathcal B$ is $C_{sd}$-disjoint-super-democratic with $C_{sd}\leq C_s$.
 	\item If $\mathcal B$ is $C_{sd}$-disjoint-super-democratic, then $\mathcal B$ is $C_s$-super-democratic with $C_s\leq C_{sd}^2$.
 \end{itemize}
This result is trivial. Indeed, if the basis is super-democratic, then it is automatically disjoint-super-democratic. For the converse, if we consider that $\mathcal B$ is $C_{sd}$-disjoint-super-democratic and take $\vert A\vert\leq \vert B\vert$ and $C$ such that $C>(A\cup B)$ with $\vert A\vert=\vert C\vert$,
$$\dfrac{\Vert\mathbf{1}_{\varepsilon A}\Vert}{\Vert \mathbf{1}_{\varepsilon' B}\Vert}=\dfrac{\Vert\mathbf{1}_{\varepsilon A}\Vert}{\Vert \mathbf{1}_{\varepsilon' C}\Vert}\dfrac{\Vert\mathbf{1}_{C}\Vert}{\Vert \mathbf{1}_{\varepsilon' B}\Vert}\leq C_{sd}^2\Rightarrow C_s\leq C_{sd}^2.$$

 Now, we ask the same equivalence for general weights. The result is the following:
\begin{prop}
	Assume that $\mathcal B$ is a basis in a Banach space $\mathbb X$.
	\item[a)] If $\mathcal B$ is $C_s$-$w$-super-democratic, then $\mathcal B$ is $C_{sd}$-$w$-disjoint-super-democratic with $C_{sd}\leq C_s$.
	\item[b)] If $\mathcal B$ is $C_{sd}$-$w$-disjoint-super-democratic, then $\mathcal B$ is $C_s$-super-democratic with $C_s\leq C_{sd}(1+c_2^2C_{sd})$.
\end{prop}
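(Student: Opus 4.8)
The proof splits into the trivial implication a) and the substantive implication b). For a) I would simply observe that every disjoint pair $(A,B)$ with $w(A)\le w(B)$ is, in particular, a pair with $w(A)\le w(B)$, so the $w$-super-democracy estimate specializes verbatim to disjoint sets and gives $C_{sd}\le C_s$.

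For b) the difficulty is that $A$ and $B$ may overlap, so a single application of disjoint-super-democracy is not available. The plan is to interpose a bridge set $C$ that is disjoint from $A\cup B$ and whose $w$-measure sits just above $w(A)$, and then to apply disjoint-super-democracy twice. Assuming $A\neq\emptyset$ (the empty case being trivial) and that $\sum_n w_n=\infty$, I would enumerate the complement $(A\cup B)^c$ and add its elements to $C$ one at a time until $w(C)\ge w(A)$; since $\sum_{n\notin A\cup B}w_n=\infty$ this process terminates, and writing $n_0$ for the last index added and $C'=C\setminus\{n_0\}$ we get $w(C')<w(A)\le w(B)$. As $C$ is disjoint from $A$ with $w(A)\le w(C)$, disjoint-super-democracy yields $\Vert\mathbf 1_{\varepsilon A}\Vert\le C_{sd}\Vert\mathbf 1_{C}\Vert$; peeling off the single coordinate $n_0$ and using that $C'$ is disjoint from $B$ with $w(C')\le w(B)$ gives $\Vert\mathbf 1_{C}\Vert\le\Vert\mathbf 1_{C'}\Vert+\Vert e_{n_0}\Vert\le C_{sd}\Vert\mathbf 1_{\varepsilon' B}\Vert+c_2$. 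Finally, the semi-normalization bound $\Vert\mathbf 1_{\varepsilon' B}\Vert\ge c_2^{-1}$ (valid since $B\neq\emptyset$, because $1=\vert e_j^*(\mathbf 1_{\varepsilon' B})\vert\le c_2\Vert\mathbf 1_{\varepsilon' B}\Vert$ for $j\in B$) converts the additive constant $c_2$ into $c_2^2\Vert\mathbf 1_{\varepsilon' B}\Vert$, so that $\Vert\mathbf 1_{\varepsilon A}\Vert\le C_{sd}(C_{sd}+c_2^2)\Vert\mathbf 1_{\varepsilon' B}\Vert$. Since $C_{sd}\ge 1$ and $c_2\ge 1$, one has $(C_{sd}-1)(c_2^2-1)\ge 0$, hence $C_{sd}(C_{sd}+c_2^2)\le C_{sd}(1+c_2^2C_{sd})$, which is exactly the asserted constant.

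The step that genuinely requires care—and which I expect to be the main obstacle—is the existence of the bridge set $C$. The construction reaches $w(A)$ only because $(A\cup B)^c$ has infinite $w$-measure, i.e. only when $\sum_n w_n=\infty$; when $\sum_n w_n<\infty$ there may be no finite set disjoint from $A$ whose $w$-measure attains $w(A)$ (for instance when $w(A)$ is close to the total mass $\sum_n w_n$), so the two-step detour is unavailable. I would dispose of this remaining regime by invoking Proposition \ref{p:find c0}(ii): under $C_{sd}$-$w$-disjoint-super-democracy the hypothesis $\sum_n w_n<\infty$ forces $\mathcal B$ to be equivalent to the $c_0$-basis, and for such a basis all the quantities $\Vert\mathbf 1_{\varepsilon A}\Vert$ are uniformly bounded above, while $\Vert\mathbf 1_{\varepsilon' B}\Vert\ge c_2^{-1}$ bounds them below; hence $w$-super-democracy holds at once with a constant controlled by $C_{sd}$ and $c_2$. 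Thus the bridge argument is needed only for weights with $\sum_n w_n=\infty$, and there it delivers the stated bound.
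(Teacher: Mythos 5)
Your proof is correct and follows essentially the same route as the paper's: a bridge set built greedily from the tail of $(A\cup B)^c$, two applications of $w$-disjoint-super-democracy with one coordinate peeled off via the triangle inequality and $\Vert \mathbf{1}_{\varepsilon' B}\Vert\geq c_2^{-1}$, and Proposition \ref{p:find c0} to dispose of the degenerate weights. The only differences are cosmetic: you avoid the paper's extra sub-case $w(B)\leq \limsup_{n} w_n$ by running the accumulation directly off $\sum_n w_n=\infty$, and your constant $C_{sd}(C_{sd}+c_2^2)$ actually matches the stated bound $C_{sd}(1+c_2^2C_{sd})$ at least as tightly as the $C_{sd}^2(1+c_2^2)$ produced in the paper's own Case 1.1.
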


\begin{proof}
Only the item b) requires a proof.	 We proceed as in the proof of Theorem \ref{main}. Take $A$ and $B$ such that $w(A)\leq w(B)$. 

\item \textbf{Case 1:} $\sum_{n=1}^\infty w_n = \infty $ and $\sup_n w_n<\infty$.

\textbf{Case 1.1:} Assume that $\lim\sup_{n\rightarrow \infty}w_n < w(B)$. Since $\sum_n w_n =\infty$, we can take $E$ and $n_0$ such that $n_0>E>A\cup B$ such that
$$w(E)\leq w(B)<w(E\cup \lbrace n_0\rbrace).$$

In this case, since $A\cap (E\cup\lbrace n_0\rbrace)=\emptyset$,
\begin{eqnarray}\label{sup1}
\Vert \mathbf{1}_{\varepsilon A}\Vert \leq C_{sd}\Vert \mathbf{1}_{E\cup \lbrace n_0\rbrace}\Vert \leq C_{sd}\Vert\mathbf{1}_{E}\Vert + C_{sd}c_2\leq C_{sd}(1+c_2^2)\Vert\mathbf{1}_{E}\Vert .
\end{eqnarray}

On the other hand, due to $w(E)\leq w(B)$ and $E\cap B=\emptyset$,
\begin{eqnarray}\label{sup2}
\Vert \mathbf{1}_{E}\Vert \leq C_{sd}\Vert \mathbf{1}_{\varepsilon' B}\Vert.
\end{eqnarray}

Using \eqref{sup1} and \eqref{sup2}, we obtain that $\mathcal B$ is $C_s$-super-democratic with $C_s\leq C_{sd}^2(1+c_2^2)$.

\textbf{Case 1.2}: $w(B)\leq \lim\sup_{n\rightarrow\infty}w_n$. Using the item i) of Proposition \ref{p:find c0}, we obtain that
$$\Vert \mathbf{1}_{\varepsilon A}\Vert\leq C_{sd}c_2^2\Vert\mathbf{1}_{\eta B}\Vert.$$

\item \textbf{Case 2:} If $\sup_n w_n =\infty$, the basis is equivalent to the $c_0$-basis and the result is trivial.

The proof is over.
\end{proof}

\textbf{Question:} Recently, in \cite{BBGHO2}, the authors proved that for Schauder bases and $w=(1,1,...)$, the constants of super-democracy and disjoint-super-democracy are of the same order up to the basis constant, that is, $C_{sd}\leq C_s\leq 2(\mathfrak{K}_b+1)C_{sd}+\kappa \mathfrak{K}_b$, where $\kappa =\sup_{n}\Vert e_n\Vert\Vert e_n^*\Vert$. Is it possible to show the same result for general weights?

%
%

%


\begin{thebibliography}{99}
 		\bibitem{AA} \textsc{F. Albiac, J.L. Ansorena},	\textit{Characterization of 1-almost greedy bases}.
 		Rev. Matem. Compl. {\bf 30} (1) (2017), 13--24.
 	\bibitem{B}\textsc{P. M. Bern\'a}, \textit{Equivalence of almost-greedy and semi-greedy bases}. J. Math. Anal. Appl. \textbf{470} (2019), 218-225. 
 		\bibitem{BBG} \textsc{P. M. Bern\'a, \'O. Blasco, G. Garrig\'os}, \textit {Lebesgue inequalities for greedy algorithm in general bases}, Rev. Mat. Complut. \textbf{30}, 369-392 (2017).
 			\bibitem{BBGHO2} \textsc{P. M. Bern\'a, \'O. Blasco, G. Garrig\'os, E. Hern\'andez, T. Oikhberg}, \textit{The Lebesgue-type parameter for the Chebyshev Thresholding Greedy Algorithm}. (Submitted) \url{https://arxiv.org/pdf/1811.04268.pdf}.
 	\bibitem{BDKOW} \textsc{P. M. Bern\'a, S. J. Dilworth, D. Kutzarova, T. Oikhberg, B. Wallis}, \textit {The weighted Property (A) and the greedy algorithm}. (Submitted) \url{https://arxiv.org/abs/1803.05052}.
 	\bibitem{CDH}\textsc{A. Cohen, R. A. DeVore, R. Hochmuth}, \textit{Restricted nonlinear approximation}, Constr. Approx., 16 (2000), 85-113.
 	\bibitem{DKK} \textsc{S. J. Dilworth, N. J. Kalton, D. Kutzarova}, \textit {On the existence of almost greedy bases in Banach spaces}, Studia Math. {\bf 159} (2003), 67--101. 
 	\bibitem{DKKT}\textsc{S. J. Dilworth, N. J. Kalton, D. Kutzarova, V. N. Temlyakov}, \textit{The thresholding greedy algorithm, greedy bases, and duality}, Constr.Approx. \textbf{19} (2003), no.4, 575-597.
 	\bibitem{DKO} \textsc{S. J. Dilworth, D. Kutzarova, T. Oikhberg}, \textit {Lebesgue constants for the weak greedy algorithm}, Rev. Matem. Compl. {\bf 28}(2), 393-409 (2015).
 	\bibitem{DKT2} \textsc{	S. J. Dilworth, D. Kutzarova, V. N. Temlyakov}, \textit{Convergence of some greedy algorithms in Banach spaces}, J. Fourier Anal. Appl. \textbf{8} (2002), no.5, 489-505.
 \bibitem{DKTW}\textsc{S. J. Dilworth, D. Kutzarova, V. N. Temlyakov, B. Wallis}, Tr. Mat. Inst. Steklova \textbf{303} MAIK Nauka/Interperiodica, Moscow, 2018, 120–141
 		\bibitem{GPT} \textsc{G. Kerkyacharian, D. Picard, V. N. Temlyakov}, \textit{Some inequalities for the
 		tensor product of greedy bases and weight-greedy bases}, East J. Approx., \textbf{12} (2006), 103-118.
 	\bibitem{KT}\textsc{S. V. Konyagin, V. N. Temlyakov}, \textit{A remark on greedy approximation in Banach spaces}, East J. Approx. \textbf{5} (1999), 365-379.
 		\bibitem{Oik}
 	\textsc{T. Oikhberg}, \emph{Greedy algorithms with gaps}, J. Approx. Theory, \textbf{255} (2018), 176--190.
 	\bibitem{Woj} \textsc{P. Wojtaszczyk}, \textit{Greedy algorithm for general biorthogonal systems}, J.Approx.Theory \textbf{107} (2000), no.2, 293-314.
 \end{thebibliography}
\end{document}